\numberwithin{equation}{section}
\theoremstyle{plain}
\newtheorem{theorem}{Theorem}[section]
\newtheorem{lemma}[theorem]{Lemma}
\newtheorem{corollary}[theorem]{Corollary}
\theoremstyle{definition}
\newtheorem{definition}[theorem]{Definition}
\theoremstyle{remark}
\newtheorem{remark}[theorem]{Remark}
\newtheorem{example}[theorem]{Example}
\newcounter{counter_a}
\newcounter{counter_b}
\newcommand{\cL}{\mathcal{L}}
\newcommand{\Dom}{{\rm Dom}}
\newcommand{\Span}{{\rm span}}
\newcommand{\dist}{{\rm dist}}
\newcommand{\ess}{\mathrm{ess}}
\newcommand{\rank}{\mathrm{Rank}}
\newcommand{\range}{\mathrm{Range}}
\newcommand{\dis}{\mathrm{dis}}
\newcommand\void[1]{}
\begin{document}
\title[A new approach to spectral approximation]{A new approach to spectral approximation}
\author[M. Strauss]{Michael Strauss}
\begin{abstract}
A new technique for approximating eigenvalues and eigenvectors of a self-adjoint operator is presented. The method does not incur spectral pollution, uses trial spaces from the form domain, has a self-adjoint algorithm, and exhibits superconvergence.

\vspace{4pt}\noindent\emph{Keywords:} Eigenvalue problem, spectral pollution, Galerkin method, finite section method, spectral approximation.

\vspace{4pt}\noindent\emph{2010 Mathematics Subject Classification:} 47A58, 47A75.
\end{abstract}

\maketitle
\thispagestyle{empty}
\section{Introduction}
We shall introduce a new technique for computing eigenvalues and eigenvectors of an arbitrary semi-bounded self-adjoint operator. The method can reliably locate those eigenvalues which, due to \emph{spectral pollution}, are not successfully approximated by a direct application of the Galerkin (finite section) method; see for example \cite{agm,boff,boff2,bobole,bost,dasu,DP,lesh,rapp}. The technique is very easy to apply,  uses trial spaces from the form domain, has a self-adjoint algorithm, and exhibits the superconvergence of the Galerkin method. The only comparable technique is the recently developed perturbation method; see \cite{mar1,mar2,me,me4}. However, the latter involves perturbing eigenvalues off the real line and then approximating them, consequently, the method is compromised by having a non-self-adjoint algorithm.

There have been two approaches to locating eigenvalues and eigenvectors when spectral pollution precludes approximation with the Galerkin method. Firstly, certain specialised techniques have been proposed, with each being applicable only to a particular class of differential operator; see for example \cite{agm,bobole,doesse,mar1,mar2,stwe1,stwe2,tes}. Secondly, general techniques which are applicable to self-adjoint or even to arbitrary operators; see for example \cite{dav,DP,ah1,ah2,ah3,lesh,shar,me,me4,zim}. The last two decades has seen an enormous effort directed at general techniques for self-adjoint operators. This effort led to the development of \emph{quadratic} methods, so-called because of their reliance on truncations of the square of the operator. Although pollution-free, these techniques have significant drawbacks. They require trial spaces from the operator domain, rather than the preferred form domain. The latter being far more convenient, for example, it will usually contain the FEM space of piecewise linear trial functions. Furthermore, only the form domain is typically supported by FEM software, consequently, applying a quadratic method can be arduous. A more serious problem concerns convergence rates. Quadratic methods will typically converge to eigenvalues very slowly when compared to the superconvergence of the Galerkin method; see for example \cite[Section 6]{me3}, \cite[examples 3.5 \& 4.3]{bost} and \cite[Example 8]{me4}. The quadratic method which has received the most attention is the \emph{second order relative spectrum}; see for example \cite{bo,bb,bole,bost,dav,lesh,shar,shar2,me2,me3}. This technique has the additional drawback of a non-self-adjoint algorithm. 

In Section 2, we establish the strong convergence, with respect to the appropriate norm, of Galerkin spectral projections which are associated to a given interval. In Section 3, we give and justify a new definition of spectral pollution. Our main result is Theorem \ref{espaces} which shows that by applying the Galerkin method to an auxiliary spectral problem, with respect to certain Galerkin spectral subspaces, we achieve pollution-free spectral approximation for our underlying self-adjoint operator. In Section 5, we apply our new technique to self-adjoint operators whose eigenvalues are not located by a direct application of the Galerkin method.

\section{Galerkin spectral projections}\label{galsub}

Throughout, $A$, denotes a bounded or semi-bounded (from below) self-adjoint operator acting on a Hilbert space $\mathcal{H}$ with inner product $\langle\cdot,\cdot\rangle$. The corresponding quadratic form we denote by $\frak{a}$. $\mathcal{H}_\frak{a}$ will be the Hilbert space with the inner-product
\[
\langle u,v\rangle_{\frak{a}}:=\frak{a}[u,v] - (m-1)\langle u,v\rangle\quad\forall u,v\in\Dom(\frak{a})\quad\textrm{where}\quad m:=\min\sigma(A)
\]
and norm
\begin{align*}
\Vert u\Vert_{\frak{a}} &=(\frak{a}[u,u] - (m-1)\langle u,u\rangle)^{\frac{1}{2}}=\Vert(A-m+1)^{\frac{1}{2}}u\Vert.
\end{align*}
We always assume that $a,b\in\mathbb{R}$, $a<b$, $a,b\in\rho(A)$ and
$\sigma_{\ess}(A)\cap[a,b]=\varnothing$.  We set $\Delta=[a,b]$, and it is our goal to approximate
\[\{\lambda_1,\dots,\lambda_d\}=\sigma(A)\cap\Delta\subset\sigma_{\dis}(A)\]
where the eigenvalues are repeated according to multiplicity. The eigenspace associate to $\sigma(A)\cap\Delta$ is denote by $\mathcal{L}(\Delta)$. Let $E$ be the orthogonal projection from $\mathcal{H}$ onto $\mathcal{L}(\Delta)$, i.e., if $\Gamma$ is the circle with centre $(a+b)/2$ and radius $(b-a)/2$, then
\[
E := -\frac{1}{2\pi i}\int_\Gamma (A-\zeta)^{-1}~d\zeta\quad\text{and}\quad\rank(E) = d<\infty.
\]

We shall use the following notions of the gap/distance between two subspaces $\mathcal{L}$ and $\mathcal{M}$ of $\mathcal{H}$:
\[
\hat{\delta}(\mathcal{L},\mathcal{M}) = \max\{\delta(\mathcal{L},\mathcal{M}),\delta(\mathcal{M},\mathcal{L})\}
\quad\textrm{where}\quad
\delta(\mathcal{L},\mathcal{M}) = \sup_{u\in\mathcal{L},\Vert u\Vert=1}\dist(u,\mathcal{M});
\]
see \cite[Section IV.2.1]{katopert} for further details. We shall write $\hat{\delta}_{\frak{a}}$ and $\delta_{\frak{a}}$  to indicate the gap/distance between subspaces of the Hilbert space $\mathcal{H}_\frak{a}$.

It will always be assumed that $(\cL_n)$ is a sequence of finite-dimensional trial spaces, each being contained in $\mathcal{H}_{\frak{a}}$. We will always assume that:
\[
\forall u\in\Dom(\frak{a})\quad\exists u_n\in\cL_n:\quad\Vert u-u_n\Vert_{\frak{a}}\to0.
\]
This is the standard condition imposed upon trial spaces when employing the Galerkin method to locate eigenvalues.
The Galerkin eigenvalues of $A$ with respect to $\mathcal{L}_n$, denoted $\sigma(A,\mathcal{L}_n)$, consists of those $\mu\in\mathbb{R}$ for which there exists a non-zero vector $u\in\mathcal{L}_n$ with
\[
\frak{a}[u,v] = \mu\langle u,v\rangle\quad\forall v\in\mathcal{L}_n.
\]
Let $P_n$ be the orthogonal projection from $\mathcal{H}$ onto $\mathcal{L}_n$. Associated to the restriction of $\frak{a}$, to the subspace $\mathcal{L}_n$, is a self-adjoint operator $A_n$ which acts on $\mathcal{L}_n$ and satisfies
\[
\langle A_nu,v\rangle = \frak{a}[u,v]\quad\forall u,v\in\mathcal{L}_n.
\]
Obviously, if $\mathcal{L}_n\subset\Dom(A)$, which is always the case when $A$ is bounded, then $A_n=P_nA|_{\mathcal{L}_n}$. Let $E_n$ the spectral measure associated to $A_n$. Then $E_n(\Delta)$ is the orthogonal projection from $\mathcal{L}_n$ onto the eigenspace corresponding to $\sigma(A,\mathcal{L}_n)\cap\Delta$ and we denote this eigenspace by $\mathcal{L}_n(\Delta)$. Let $Q_n$ be the orthogonal projection from $\mathcal{H}$ onto $\mathcal{L}_n(\Delta)$, i.e., $Q_n=E_n(\Delta)P_n$.

\begin{lemma}\label{l1}
If $A$ is bounded, then $\Vert(Q_n-E)u\Vert\to0$ for all $u\in\mathcal{H}$.
\end{lemma}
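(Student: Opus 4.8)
The plan is to recognise $Q_n$ as an honest spectral projection of a uniformly bounded self-adjoint operator on the whole of $\mathcal{H}$, and then to deduce the result from strong resolvent convergence. The first step is to translate the standing approximation hypothesis into strong convergence of the projections $P_n$. Since $A$ is bounded we have $\Dom(\frak{a})=\mathcal{H}$, and because $(A-m+1)^{\frac12}$ is bounded with bounded inverse the form norm $\Vert\cdot\Vert_{\frak{a}}$ is equivalent to $\Vert\cdot\Vert$. Hence the hypothesis that every $u$ is $\Vert\cdot\Vert_{\frak{a}}$-approximable from $\mathcal{L}_n$ becomes $\dist(u,\mathcal{L}_n)\to0$, i.e. $\Vert(I-P_n)u\Vert\to0$ for all $u$, so $P_n\to I$ strongly. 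Together with the boundedness of $A$ this gives $P_nAP_n\to A$ strongly, since $\Vert P_nAP_nu-Au\Vert\le\Vert AP_nu-Au\Vert+\Vert(I-P_n)Au\Vert\to0$.

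Next I would remove the artificial kernel of the compression. Fix any $c\in\mathbb{R}\setminus[a,b]$ and set $\hat A_n:=P_nAP_n+c(I-P_n)$. This operator is self-adjoint, satisfies $\Vert\hat A_n\Vert\le\max\{\Vert A\Vert,|c|\}$, leaves both $\mathcal{L}_n$ and $\mathcal{L}_n^\perp$ invariant, and restricts to $A_n$ on $\mathcal{L}_n$ and to $c\,I$ on $\mathcal{L}_n^\perp$. Because $c\notin[a,b]$, the eigenvectors of $\hat A_n$ with eigenvalue in $\Delta$ are exactly those of $A_n$ in $\Delta$, so the spectral projection of $\hat A_n$ associated with $\Delta$ is precisely $E_n(\Delta)P_n=Q_n$. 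Moreover $\hat A_n\to A$ strongly, because $c(I-P_n)u\to0$ and $P_nAP_n\to A$ strongly.

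Finally I would invoke strong resolvent convergence. A uniformly bounded, strongly convergent sequence of self-adjoint operators converges in the strong resolvent sense: for $\Im\zeta\ne0$ the identity $(\hat A_n-\zeta)^{-1}-(A-\zeta)^{-1}=(\hat A_n-\zeta)^{-1}(A-\hat A_n)(A-\zeta)^{-1}$, combined with the uniform bound $\Vert(\hat A_n-\zeta)^{-1}\Vert\le|\Im\zeta|^{-1}$, yields strong convergence of the resolvents. Since $a,b\in\rho(A)$, both are continuity points of the spectral family of $A$ (indeed $E(\{a\})=E(\{b\})=0$ and $E([a,b])=E$), so the standard spectral-projection theorem for strong resolvent convergence gives $Q_n=\hat E_n([a,b])\to E([a,b])=E$ strongly, which is the assertion.

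The one genuinely delicate point, and the step I would treat most carefully, is that strong resolvent convergence does not preserve spectral projections in general: it does so exactly at continuity points of the limiting spectral family. This is precisely where the hypotheses $a,b\in\rho(A)$ enter, and it is also why the extension by $c\,I$ must be chosen with $c\notin[a,b]$, so that the artificial eigenvalue does not fall inside the window $\Delta$ and corrupt the projection. It is worth stressing that no control of spectral pollution is required at this stage: even if $A_n$ carries many spurious eigenvalues in $(a,b)$, they are automatically absorbed by the strong resolvent limit, which forces the associated spurious eigenvectors to escape the projection of every fixed vector in the limit.
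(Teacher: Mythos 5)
Your proof is correct and follows essentially the same route as the paper: both arguments rest on the strong convergence of the compressions $P_nAP_n$ to $A$ together with the standard Kato result that strong (resolvent) convergence of uniformly bounded self-adjoint operators yields strong convergence of spectral projections at continuity points of the limiting spectral family, which is where $a,b\in\rho(A)$ enters. The only (cosmetic) difference is how the artificial kernel on $\mathcal{L}_n^\perp$ is neutralised: the paper writes $Q_nu=P_nu-E_n((-\infty,a))P_nu-E_n((b,\infty))P_nu$ and uses $P_n\to I$, whereas you shift the orthocomplement by $c(I-P_n)$ with $c\notin[a,b]$.
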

\begin{proof}
The sequence of operators $(A_nP_n)$ converges strongly to $A$. It follows that $E_n((-\infty,a))P_n$ and $E_n((b,\infty))P_n$ converge strongly to $E((-\infty,a))$ and $E((b,\infty))$, respectively; see for example \cite[Corollary VIII.1.6]{katopert}. Let $u\in\mathcal{H}$, then
\begin{align*}
Q_nu = E_n(\Delta)P_nu =  P_nu - E_n((-\infty,a))P_nu - E_n((b,\infty))P_nu\to Eu.
\end{align*}
\end{proof}

We denote by $\tilde{\mathcal{L}}_n$ the range of the restriction of $(A-m+1)^{\frac{1}{2}}$ to $\mathcal{L}_n$. The orthogonal projection from $\mathcal{H}$ onto $\tilde{\mathcal{L}}_n$ we denote by $\tilde{P}_n$.

\begin{lemma}\label{tp}
$\Vert(\tilde{P}_n -I)u\Vert\to0$ for all $u\in\mathcal{H}$.
\end{lemma}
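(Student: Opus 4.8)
The plan is to exploit the fact that, by the very definition of the form norm, the operator $T:=(A-m+1)^{\frac{1}{2}}$ is an isometric isomorphism from $\mathcal{H}_{\frak{a}}$ onto $\mathcal{H}$, and then to transport the trial-space density hypothesis across it. Since $m=\min\sigma(A)$ we have $A-m+1\ge 1$, so $T$ is a self-adjoint operator with $T\ge 1$; in particular $0\in\rho(T)$ and $T$ is a bijection from $\Dom(T)=\Dom(\frak{a})$ onto $\mathcal{H}$ with $\Vert T^{-1}\Vert\le 1$. Moreover $\Vert u\Vert_{\frak{a}}=\Vert Tu\Vert$ for every $u\in\Dom(\frak{a})$, so $T$ carries $\mathcal{L}_n$ onto $\tilde{\mathcal{L}}_n$ isometrically (from the $\frak{a}$-norm to the $\mathcal{H}$-norm); as $\tilde{\mathcal{L}}_n$ is finite-dimensional, hence closed, $\tilde{P}_n$ is the genuine orthogonal projection onto it and $\Vert(\tilde{P}_n-I)w\Vert=\dist(w,\tilde{\mathcal{L}}_n)$ for all $w\in\mathcal{H}$.

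With this in hand the main step is almost immediate. Fix $w\in\mathcal{H}$ and set $u:=T^{-1}w\in\Dom(\frak{a})$. The standing trial-space assumption supplies $u_n\in\mathcal{L}_n$ with $\Vert u-u_n\Vert_{\frak{a}}\to0$. Writing $Tu_n\in\tilde{\mathcal{L}}_n$, I would then estimate
\[
\Vert(\tilde{P}_n-I)w\Vert=\dist(w,\tilde{\mathcal{L}}_n)\le\Vert w-Tu_n\Vert=\Vert T(u-u_n)\Vert=\Vert u-u_n\Vert_{\frak{a}}\to0,
\]
which is exactly the claim.

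The only point that genuinely needs care — and which is the crux of the argument — is the surjectivity of $T$ onto $\mathcal{H}$: because $T\ge1$ is boundedly invertible, every $w\in\mathcal{H}$ is of the form $Tu$ with $u=T^{-1}w$ lying in the form domain, so the density hypothesis (which is stated in the $\frak{a}$-norm, i.e. for vectors of $\mathcal{H}_{\frak{a}}$) may be applied to $u$ and then pushed forward to $w$. Everything else — the isometry property of $T$ between the two norms, finite-dimensionality (hence closedness) of $\tilde{\mathcal{L}}_n$, and the identity $\Vert(\tilde{P}_n-I)w\Vert=\dist(w,\tilde{\mathcal{L}}_n)$ — is routine. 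Notably, no uniform-boundedness or dense-subset approximation argument is needed here, since the hypothesis produces an approximant in $\tilde{\mathcal{L}}_n$ directly for every target $w$.
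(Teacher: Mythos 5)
Your proof is correct and is essentially identical to the paper's: both pick the preimage of the target vector under $(A-m+1)^{1/2}$, approximate it in $\mathcal{L}_n$ using the standing density hypothesis in the $\frak{a}$-norm, and push the approximant forward to $\tilde{\mathcal{L}}_n$ via the isometry $\Vert\cdot\Vert_{\frak{a}}=\Vert(A-m+1)^{1/2}\cdot\Vert$. The only difference is that you spell out the surjectivity of $(A-m+1)^{1/2}$ explicitly, which the paper leaves implicit.
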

\begin{proof}
Let $u\in\mathcal{H}$ and $v=(A-m+1)^{-\frac{1}{2}}u$. There exist vectors $v_n\in\mathcal{L}_n$ with $\Vert v_n-v\Vert_{\frak{a}}\to 0$. Then
\begin{align*}
\Vert(\tilde{P}_n -I)u\Vert&\le\Vert(A-m+1)^{\frac{1}{2}}v_n-u\Vert=\Vert(A-m+1)^{\frac{1}{2}}(v_n-v)\Vert = \Vert v_n-v\Vert_{\frak{a}}.
\end{align*}
\end{proof}

\begin{lemma}\label{l2}
If $A$ is semi-bounded, then  $\Vert(Q_n -E)u\Vert_{\frak{a}}\to 0$ for all $u\in\mathcal{H}_{\frak{a}}$.
\end{lemma}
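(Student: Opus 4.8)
The plan is to reduce the semi-bounded, form-norm statement to the bounded, norm statement of Lemma \ref{l1} by passing to the operator $B:=(A-m+1)^{-1}$ and running the Galerkin machinery of this section inside the Hilbert space $\mathcal{H}_{\frak{a}}$ rather than $\mathcal{H}$. First I would record that $B$ is bounded and self-adjoint with respect to $\langle\cdot,\cdot\rangle_{\frak{a}}$, since it commutes with $(A-m+1)^{\frac12}$ and hence $\langle Bu,v\rangle_{\frak{a}}=\langle u,Bv\rangle_{\frak{a}}$. With $g(\lambda)=(\lambda-m+1)^{-1}$ and $\Delta'=[g(b),g(a)]$, the spectral mapping theorem for the decreasing bijection $g$ gives that the spectral projection of $B$ associated with $\Delta'$ equals $E$; as $E$ commutes with $(A-m+1)^{\frac12}$, it is simultaneously the $\mathcal{H}_{\frak{a}}$-orthogonal projection onto $\mathcal{L}(\Delta)$. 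The hypotheses of Lemma \ref{l1} transfer to $B$ on $\mathcal{H}_{\frak{a}}$: the endpoints of $\Delta'$ lie in $\rho(B)$, $\sigma_{\ess}(B)\cap\Delta'=\varnothing$, and the standing density assumption on $(\mathcal{L}_n)$ is precisely the strong convergence to $I$ of the $\mathcal{H}_{\frak{a}}$-orthogonal projections $\hat{P}_n$ onto $\mathcal{L}_n$.

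The decisive point is that the $\mathcal{H}_{\frak{a}}$-Galerkin problem for $B$ selects the same trial subspace $\mathcal{L}_n(\Delta)$ as the $\mathcal{H}$-Galerkin problem for $A$. I would verify this from the identity $\langle Bu,v\rangle_{\frak{a}}=\langle u,v\rangle$ (valid for all $u,v\in\mathcal{H}_{\frak{a}}$), which turns the form-Galerkin eigenvalue equation $\langle Bu,v\rangle_{\frak{a}}=\nu\langle u,v\rangle_{\frak{a}}$ into $\frak{a}[u,v]=(\nu^{-1}+m-1)\langle u,v\rangle$ on $\mathcal{L}_n$; hence $u\in\mathcal{L}_n$ is a form-Galerkin eigenvector of $B$ with eigenvalue $\nu$ if and only if it is a Galerkin eigenvector of $A$ with eigenvalue $\mu=g^{-1}(\nu)$, and $\mu\in\Delta\Leftrightarrow\nu\in\Delta'$. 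Consequently the $\mathcal{H}_{\frak{a}}$-orthogonal Galerkin spectral projection $\hat{Q}_n$ onto $\mathcal{L}_n(\Delta)$ is exactly the object produced by Lemma \ref{l1} applied to $B$ on $\mathcal{H}_{\frak{a}}$ (whose proof needs only $\hat{P}_nB\hat{P}_n\to B$ strongly, which holds because $B$ is bounded and $\hat{P}_n\to I$ strongly), giving $\Vert(\hat{Q}_n-E)u\Vert_{\frak{a}}\to0$ for all $u\in\mathcal{H}_{\frak{a}}$.

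The main obstacle is that the projection named in the statement, $Q_n$, is $\mathcal{H}$-orthogonal, whereas $\hat{Q}_n$ is $\mathcal{H}_{\frak{a}}$-orthogonal, so the two do not coincide on all of $\mathcal{H}_{\frak{a}}$ and Lemma \ref{l1} cannot be quoted for $Q_n$ directly. I would control the discrepancy as follows. Fixing an $\mathcal{H}$-orthonormal eigenbasis $\{u_j\}$ of $A_n$ in $\mathcal{L}_n$ with eigenvalues $\mu_j$, one checks that $\frak{a}[u_k,u_j]=\mu_j\langle u_k,u_j\rangle$ forces these vectors to be simultaneously $\mathcal{H}_{\frak{a}}$-orthogonal with $\Vert u_j\Vert_{\frak{a}}^2=\mu_j-m+1$. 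It follows that $Q_n$ and $\hat{Q}_n$ agree on $\mathcal{L}_n$, and writing $u=\hat{P}_nu+(I-\hat{P}_n)u$ together with $Q_n\hat{P}_n=\hat{Q}_n\hat{P}_n=\hat{Q}_n$ yields the clean identity $(Q_n-\hat{Q}_n)u=Q_n(I-\hat{P}_n)u$.

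It then remains to bound this remainder in the form norm. Since $Q_n(I-\hat{P}_n)u=\sum_{\mu_j\in\Delta}\langle(I-\hat{P}_n)u,u_j\rangle u_j$ and $\mu_j\le b$ on this range, the $\mathcal{H}_{\frak{a}}$-orthogonality of $\{u_j\}$ gives
\[
\Vert Q_n(I-\hat{P}_n)u\Vert_{\frak{a}}^2=\sum_{\mu_j\in\Delta}|\langle(I-\hat{P}_n)u,u_j\rangle|^2(\mu_j-m+1)\le(b-m+1)\Vert(I-\hat{P}_n)u\Vert^2,
\]
and the right-hand side tends to $0$ because $\Vert(I-\hat{P}_n)u\Vert\le\Vert(I-\hat{P}_n)u\Vert_{\frak{a}}\to0$. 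Combining this with the convergence $\Vert(\hat{Q}_n-E)u\Vert_{\frak{a}}\to0$ through the triangle inequality gives $\Vert(Q_n-E)u\Vert_{\frak{a}}\to0$, as required. The only genuinely delicate verifications are the simultaneous orthogonality of the eigenbasis and the resulting uniform (in $n$) form-norm bound on $Q_n$ over the selected subspace, both of which hinge on the selected eigenvalues $\mu_j$ lying in the bounded interval $\Delta$.
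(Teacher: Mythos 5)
Your proof is correct and is essentially the paper's argument written in different coordinates: the paper transports everything to $\mathcal{H}$ via the unitary $(A-m+1)^{\frac{1}{2}}:\mathcal{H}_{\frak{a}}\to\mathcal{H}$ and applies Lemma \ref{l1} to $(A-m+1)^{-1}$ with the transformed trial spaces $\tilde{\mathcal{L}}_n$ (using Lemma \ref{tp} for their density), whereas you perform the same reduction intrinsically in $\mathcal{H}_{\frak{a}}$. Your splitting $(Q_n-E)u=(Q_n-\hat{Q}_n)u+(\hat{Q}_n-E)u$, with the correction term bounded by $\sqrt{b-m+1}\,\Vert(I-\hat{P}_n)u\Vert$, corresponds exactly to the paper's two displayed estimates.
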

\begin{proof}
Let $u_{n,1},\dots,u_{n,d_n}$ be orthonormal eigenvectors corresponding to the intersection $\sigma(A,\cL_n)\cap\Delta$. Then $\langle u_{n,i},u_{n,j}\rangle=\delta_{ij}$ and
\[\frak{a}[u_{n,j},v]=\mu_{n,j}\langle u_{n,j},v\rangle\quad\forall v\in\cL_n\quad\textrm{where}\quad\mu_{n,j}\in\Delta.\]
For each $v\in\cL_n$, we set $\tilde{v}=(A-m+1)^{\frac{1}{2}}v\in\tilde{\cL}_n$. Then
\[
\langle(A-m+1)^{-1}\tilde{u}_{n,j},\tilde{v}\rangle=\frac{1}{\mu_{n,j}-m+1}\langle\tilde{u}_{n,j},\tilde{v}\rangle\quad\forall\tilde{v}\in\tilde{\cL}_n
\]
where
\[
\frac{1}{\mu_{n,j}-m+1}\in\tilde{\Delta}:=\left[\frac{1}{b-m+1},\frac{1}{a-m+1}\right].
\]
Note that $E$ is the spectral projection associated to the self-adjoint operator $(A-m+1)^{-1}$ and the interval $\tilde{\Delta}$. Furthermore, the set
\begin{equation}\label{invvecs}
\left\{\frac{\tilde{u}_{n,1}}{\sqrt{\mu_{n,1}-m+1}},\dots,\frac{\tilde{u}_{n,d_n}}{\sqrt{\mu_{n,d_n}-m+1}}\right\}
\end{equation}
are orthonormal eigenvectors associated to $\sigma((A-m+1)^{-1},\tilde{\cL}_n)\cap\tilde{\Delta}$. We denote by $\tilde{Q}_n$ the orthogonal projection from $\mathcal{H}$ onto
the span of the set \eqref{invvecs}. It follows, from Lemma \ref{l1} and Lemma \ref{tp}, that 
\[(\tilde{Q}_n-E)u\to 0\quad\forall u\in\mathcal{H}.
\]
Let $u\in\mathcal{H}_{\frak{a}}$
and $(A-m+1)^{\frac{1}{2}}u=v$. Then
\begin{align*}
\Vert(Q_n - E)u\Vert_{\frak{a}}
&\le\Big\Vert\sum_{j=1}^{d_n}\big\langle(A-m+1)^{-\frac{1}{2}}\tilde{P}_nv,
u_{n,j}\big\rangle u_{n,j} - Eu\Big\Vert_{\frak{a}}\\
&\quad+\Big\Vert\sum_{j=1}^{d_n}\big\langle(A-m+1)^{-\frac{1}{2}}(I-\tilde{P}_n)v,
u_{n,j}\big\rangle u_{n,j}\Big\Vert_{\frak{a}}
\end{align*}
where
\begin{align*}
\Big\Vert\sum_{j=1}^{d_n}\big\langle(A-m+1)^{-\frac{1}{2}}&\tilde{P}_nv,
u_{n,j}\big\rangle u_{n,j} - Eu\Big\Vert_{\frak{a}}\\
&=\Big\Vert\sum\big\langle\tilde{P}_nv,
(A-m+1)^{-1}\tilde{u}_{n,j}\big\rangle u_{n,j} - Eu\Big\Vert_{\frak{a}}\\
&=\Big\Vert\sum\frac{\langle\tilde{P}_nv,
\tilde{u}_{n,j}\rangle u_{n,j}}{\mu_{n,j}-m+1} - Eu\Big\Vert_{\frak{a}}\\
&=\Big\Vert\sum\frac{\langle v,\tilde{u}_{n,j}\rangle u_{n,j}}{\mu_{n,j}-m+1} - E(A-m+1)^{-\frac{1}{2}}v\Big\Vert_{\frak{a}}\\
&=\Big\Vert\sum\frac{\langle v,\tilde{u}_{n,j}\rangle\tilde{u}_{n,j}}{\Vert\tilde{u}_{n,j}\Vert^2} - Ev\Big\Vert\\
&=\Vert(\tilde{Q}_n - E)v\Vert\to 0
\end{align*}
and
\begin{align*}
&\Big\Vert\sum_{j=1}^{d_n}\big\langle(A-m+1)^{-\frac{1}{2}}(I-\tilde{P}_n)v,
u_{n,j}\big\rangle u_{n,j}\Big\Vert_{\frak{a}}\\
&\qquad\qquad=\Big\Vert\sum\big\langle(A-m+1)^{-1}(I-\tilde{P}_n)v,
\tilde{u}_{n,j}\big\rangle\tilde{u}_{n,j}\Big\Vert\\
&\qquad\qquad=\Big\Vert\sum(\mu_{n,j}-m+1)\frac{\langle(A-m+1)^{-1}(I-\tilde{P}_n)v,
\tilde{u}_{n,j}\rangle}{\Vert\tilde{u}_{n,j}\Vert^2}\tilde{u}_{n,j}\Big\Vert\\
&\qquad\qquad=\sqrt{\sum(\mu_{n,j}-m+1)^2\left\vert\frac{\langle(A-m+1)^{-1}(I-\tilde{P}_n)v,
\tilde{u}_{n,j}\rangle}{\Vert\tilde{u}_{n,j}\Vert}\right\vert^2}\\
&\qquad\qquad\le(b-m+1)\Vert\tilde{Q}_n(A-m+1)^{-1}(I-\tilde{P}_n)v\Vert\to 0.
\end{align*}
\end{proof}

\section{Spectral pollution}
When $A$ is bounded and $\Delta$ intersects the closed convex hull of $\sigma_{\ess}(A)$ or when $A$ is semi-bounded and $\min\sigma_{\ess}(A)<b$, the Galerkin method cannot usually be relied on to approximate $\sigma(A)\cap\Delta$. This is due to a phenomenon known as spectral pollution which is normally defined in terms of spurious Galerkin eigenvalues, i.e., in terms of whether or not
\begin{equation}\label{poll1}
d_{H}(\sigma(A,\mathcal{L}_n)\cap\Delta,\{\lambda_1,\dots,\lambda_d\})
\end{equation}
converges to zero, where $d_H$ is the Hausdorff distance. However, this is not entirely satisfactory. It is straightforward to construct an example with, for some subsequence $(n_j)$,
\begin{equation}\label{poll2}
\dim\mathcal{L}_{n_j}(\Delta)>\dim\mathcal{L}(\Delta)\quad\forall j\in\mathbb{N}
\end{equation}
and where we can choose  whether or not \eqref{poll1} converges to zero. In other words, the Galerkin method can fail to approximate $\mathcal{L}(\Delta)$ even if \eqref{poll1} converges to zero. 

\begin{definition}\label{poll0}
We say that \emph{spectral pollution} occurs in $\Delta$ for the sequence $(\mathcal{L}_n)$ if
$\delta(\mathcal{L}_n(\Delta),\mathcal{L}(\Delta))\nrightarrow 0$.
\end{definition}

\begin{lemma}\label{poll}
Let spectral pollution not occur in $\Delta$ for the sequence $(\mathcal{L}_n)$. If $A$ is bounded, then
\begin{equation}\label{poll6}
\hat{\delta}(\mathcal{L}_n(\Delta),\mathcal{L}(\Delta))\to 0\quad\text{and}\quad\text{d}_{H}(\sigma(A,\mathcal{L}_n)\cap\Delta,\{\lambda_1,\dots,\lambda_d\})\to 0.
\end{equation}
If $A$ is semi-bounded, then
\begin{equation}\label{poll7}
\hat{\delta}_{\frak{a}}(\mathcal{L}_n(\Delta),\mathcal{L}(\Delta))\to 0\quad\text{and}\quad\text{d}_{H}(\sigma(A,\mathcal{L}_n)\cap\Delta,\{\lambda_1,\dots,\lambda_d\})\to 0.
\end{equation}
\end{lemma}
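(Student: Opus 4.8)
The plan is to prove the bounded statement \eqref{poll6} directly from Lemma~\ref{l1}, and then to deduce the semi-bounded statement \eqref{poll7} by transporting everything to the bounded resolvent $B:=(A-m+1)^{-1}$, for which the form norm of $A$ becomes the ambient norm and the trial spaces become $\tilde{\cL}_n$.

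\emph{Bounded case.} The hypothesis reads $\delta(\cL_n(\Delta),\cL(\Delta))\to0$, so only the reverse gap and the eigenvalues remain. Since $\cL(\Delta)=\Ran E$ is $d$-dimensional, I would fix an orthonormal basis $e_1,\dots,e_d$ and use $\dist(u,\cL_n(\Delta))\le\|(E-Q_n)u\|$ together with compactness of the unit sphere to get $\delta(\cL(\Delta),\cL_n(\Delta))\le(\sum_i\|(Q_n-E)e_i\|^2)^{1/2}\to0$ from Lemma~\ref{l1}; hence $\hat\delta(\cL_n(\Delta),\cL(\Delta))\to0$. By the dimension comparison for subspaces with gap $<1$ \cite[Section~IV.2.1]{katopert}, this forces $\dim\cL_n(\Delta)=d$ for all large $n$. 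For the eigenvalues I would note that $\langle A_nu,u\rangle=\frak{a}[u,u]=\langle Au,u\rangle$ for $u\in\cL_n(\Delta)$, so the Galerkin eigenvalues $\mu_{n,1}\le\dots\le\mu_{n,d}$ in $\Delta$ and the true eigenvalues $\lambda_1\le\dots\le\lambda_d$ are the min--max values of $u\mapsto\langle Au,u\rangle$ over $k$-dimensional subspaces of $\cL_n(\Delta)$ and of $\cL(\Delta)$, respectively. Writing $\eps_n:=\delta(\cL_n(\Delta),\cL(\Delta))$, the map $E$ restricts to a bijection $\cL_n(\Delta)\to\cL(\Delta)$ with $\|Eu-u\|\le\eps_n\|u\|$, under which these Rayleigh quotients correspond up to an error $O(\eps_n)$; a routine min--max comparison then gives $|\mu_{n,k}-\lambda_k|=O(\eps_n)\to0$, which is \eqref{poll6}.

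\emph{Semi-bounded case.} The device is the unitary $U:=(A-m+1)^{1/2}$ from $(\cH_{\frak{a}},\|\cdot\|_{\frak{a}})$ onto $(\cH,\|\cdot\|)$, which carries $\frak{a}$-gaps to ordinary gaps. Because $E$ commutes with $A$, $U$ maps $\cL(\Delta)$ onto itself, while by construction it maps $\cL_n(\Delta)=\spn\{u_{n,j}\}$ onto $\Ran\tilde{Q}_n=\spn\{\tilde{u}_{n,j}\}$; consequently $\hat\delta_{\frak{a}}(\cL_n(\Delta),\cL(\Delta))=\hat\delta(\Ran\tilde{Q}_n,\cL(\Delta))$ and it suffices to bound the ordinary gap on the right. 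The strong convergence $\tilde{Q}_n\to E$ proved inside Lemma~\ref{l2}, together with finiteness of $\dim\cL(\Delta)$, gives the easy direction $\delta(\cL(\Delta),\Ran\tilde{Q}_n)\to0$ exactly as above. The main obstacle is the reverse direction: the hypothesis controls $\delta(\cL_n(\Delta),\cL(\Delta))$ in $\cH$, whereas I need $\delta(\Ran\tilde{Q}_n,\cL(\Delta))$, and since $U$ is not an $\cH$-isometry these are genuinely different quantities, so no direct estimate transfers.

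I would resolve this by a dimension count rather than an inequality. The hypothesis $\delta(\cL_n(\Delta),\cL(\Delta))\to0$ gives $\dim\cL_n(\Delta)\le d$ for large $n$, while $\delta(\cL(\Delta),\Ran\tilde{Q}_n)\to0$ gives $d\le\dim\Ran\tilde{Q}_n=\dim\cL_n(\Delta)$; hence $\dim\Ran\tilde{Q}_n=\dim\cL(\Delta)=d$ eventually (both using the dimension comparison of \cite[Section~IV.2.1]{katopert}). For two subspaces of equal finite dimension the one-sided gaps coincide, so the already-proved $\delta(\cL(\Delta),\Ran\tilde{Q}_n)\to0$ yields $\delta(\Ran\tilde{Q}_n,\cL(\Delta))\to0$, and therefore $\hat\delta_{\frak{a}}(\cL_n(\Delta),\cL(\Delta))\to0$. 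Finally, this last estimate says precisely that spectral pollution does not occur for $B=(A-m+1)^{-1}$ with trial spaces $\tilde{\cL}_n$ (whose completeness is Lemma~\ref{tp}) relative to $\tilde\Delta$; since $B$ is bounded, the bounded case applies and gives $(\mu_{n,j}-m+1)^{-1}\to(\lambda_i-m+1)^{-1}$. Composing with the map $t\mapsto t^{-1}+m-1$, continuous on $\tilde\Delta$, returns $\mu_{n,j}\to\lambda_i$ and hence $\mathrm{d}_{H}(\sigma(A,\cL_n)\cap\Delta,\{\lambda_1,\dots,\lambda_d\})\to0$, completing \eqref{poll7}.
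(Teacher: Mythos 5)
Your argument is correct, and in the bounded case it coincides with the paper's (which invokes Lemma \ref{l1} for the missing direction of the gap and leaves the min--max eigenvalue comparison implicit; you supply that detail). The semi-bounded case is where you genuinely diverge. The paper stays inside $\mathcal{H}_{\frak{a}}$: Lemma \ref{l2} gives $\delta_{\frak{a}}(\mathcal{L}(\Delta),\mathcal{L}_n(\Delta))\to 0$ directly, the no-pollution hypothesis then forces $\dim\mathcal{L}_n(\Delta)=d$ eventually, and Kato's inequality $\delta_{\frak{a}}(\mathcal{L}_n(\Delta),\mathcal{L}(\Delta))\le\delta_{\frak{a}}(\mathcal{L}(\Delta),\mathcal{L}_n(\Delta))/(1-\delta_{\frak{a}}(\mathcal{L}(\Delta),\mathcal{L}_n(\Delta)))$ for subspaces of equal finite dimension closes the loop. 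You instead transport everything through the isometry $(A-m+1)^{1/2}\colon\mathcal{H}_{\frak{a}}\to\mathcal{H}$ onto the bounded resolvent $B=(A-m+1)^{-1}$ with trial spaces $\tilde{\mathcal{L}}_n$ --- which is precisely the device used \emph{inside} the proof of Lemma \ref{l2}, so you are re-opening that proof rather than citing its conclusion --- and you replace Kato's inequality with the exact identity $\delta(\mathcal{M},\mathcal{N})=\delta(\mathcal{N},\mathcal{M})$ for equal finite dimensions; that identity is true (the operators $P_{\mathcal{M}}P_{\mathcal{N}}P_{\mathcal{M}}|_{\mathcal{M}}$ and $P_{\mathcal{N}}P_{\mathcal{M}}P_{\mathcal{N}}|_{\mathcal{N}}$ have the same eigenvalue multiset) but is sharper than the cited Kato lemma and deserves a line of justification. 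What your route buys is the eigenvalue statement essentially for free: once no pollution holds for $B$ on $\tilde{\Delta}$ with trial spaces $\tilde{\mathcal{L}}_n$, the bounded case plus the continuous spectral mapping $t\mapsto t^{-1}+m-1$ delivers the Hausdorff convergence, whereas the paper must assert the semi-bounded eigenvalue convergence separately. The only steps you should make explicit are the routine verifications that $B$, $\tilde{\Delta}$ and $(\tilde{\mathcal{L}}_n)$ satisfy the standing hypotheses (Lemma \ref{tp} for asymptotic density, spectral mapping for $\sigma_{\ess}(B)\cap\tilde{\Delta}=\varnothing$) and that $\sigma(B,\tilde{\mathcal{L}}_n)\cap\tilde{\Delta}$ together with its eigenspace corresponds bijectively to $\sigma(A,\mathcal{L}_n)\cap\Delta$ and $\mathcal{L}_n(\Delta)$, which is the content of the display \eqref{invvecs} in the proof of Lemma \ref{l2}.
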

\begin{proof}
Let $A$ be bounded. 
It follows, from Lemma \ref{l1}, that
$\delta(\mathcal{L}(\Delta),\mathcal{L}_n(\Delta))$ converges to zero. Hence
$\hat{\delta}(\mathcal{L}_n(\Delta),\mathcal{L}(\Delta))$ converges to zero,
and the right hand side of \eqref{poll6} follows.

Let $A$ be semi-bounded. It follows, from Lemma \ref{l2}, that
$\delta_{\frak{a}}(\mathcal{L}(\Delta),\mathcal{L}_n(\Delta))$ converges to zero. Hence $\hat{\delta}(\mathcal{L}(\Delta),\mathcal{L}_n(\Delta))$ comverges to zero
which implies that 
\[
\dim\mathcal{L}_n(\Delta)=\dim\mathcal{L}(\Delta)=d<\infty
\]
for all sufficiently large $n\in\mathbb{N}$. Therefore, the following formula holds
\[
\delta_{\frak{a}}(\mathcal{L}_n(\Delta),\mathcal{L}(\Delta)) \le 
\frac{\delta_{\frak{a}}(\mathcal{L}(\Delta),\mathcal{L}_n(\Delta))}{1-\delta_{\frak{a}}(\mathcal{L}(\Delta),\mathcal{L}_n(\Delta))};
\]
see \cite[Lemma 213]{kato}.
Hence
$\hat{\delta}_{\frak{a}}(\mathcal{L}_n(\Delta),\mathcal{L}(\Delta))\to 0$,
and the right hand side of \eqref{poll7} follows. 
\end{proof}

\begin{lemma}\label{poll10}
Spectral pollution occurs in $\Delta$ for the sequence $(\mathcal{L}_n)$ if and only if \eqref{poll2} holds.
\end{lemma}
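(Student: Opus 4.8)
The plan is to prove the two implications separately, treating the bounded and semi-bounded cases together wherever possible. Throughout I will use the elementary gap fact that $\delta(\mathcal{M},\mathcal{N})<1$ forces $\dim\mathcal{M}\le\dim\mathcal{N}$ for closed subspaces $\mathcal{M},\mathcal{N}$ of $\mathcal{H}$: if $\dim\mathcal{M}>\dim\mathcal{N}$, then the orthogonal projection onto $\mathcal{N}$, restricted to $\mathcal{M}$, has non-trivial kernel, so there is a unit vector $u\in\mathcal{M}$ with $u\perp\mathcal{N}$ and hence $\dist(u,\mathcal{N})=1$; see \cite[Section IV.2.1]{katopert}.

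For the implication \eqref{poll2}$\,\Rightarrow$ pollution, suppose there is a subsequence $(n_j)$ with $\dim\mathcal{L}_{n_j}(\Delta)>\dim\mathcal{L}(\Delta)=d$. By the gap fact above, each $\mathcal{L}_{n_j}(\Delta)$ contains a unit vector $u_j$ orthogonal to $\mathcal{L}(\Delta)$, so that $\dist(u_j,\mathcal{L}(\Delta))=1$ and therefore $\delta(\mathcal{L}_{n_j}(\Delta),\mathcal{L}(\Delta))=1$ for every $j$. In particular $\delta(\mathcal{L}_n(\Delta),\mathcal{L}(\Delta))\nrightarrow 0$, which is spectral pollution in the sense of Definition \ref{poll0}.

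For the converse I argue by contraposition: assuming \eqref{poll2} fails, so that $\dim\mathcal{L}_n(\Delta)\le d$ for all sufficiently large $n$, I would deduce $\delta(\mathcal{L}_n(\Delta),\mathcal{L}(\Delta))\to 0$. First I would establish the unconditional convergence $\delta(\mathcal{L}(\Delta),\mathcal{L}_n(\Delta))\to 0$ in the norm of $\mathcal{H}$. When $A$ is bounded this is immediate from Lemma \ref{l1}: for $u\in\mathcal{L}(\Delta)$ one has $Eu=u$ and $Q_nu\in\mathcal{L}_n(\Delta)$, so $\dist(u,\mathcal{L}_n(\Delta))\le\|u-Q_nu\|\to 0$, uniformly on the unit sphere of the finite-dimensional space $\mathcal{L}(\Delta)$. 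When $A$ is semi-bounded I would use Lemma \ref{l2} together with the comparison $\|w\|\le\|w\|_{\frak{a}}$ (valid since $A-m+1\ge I$) to pass from $\frak{a}$-norm to $\mathcal{H}$-norm convergence, via $\dist(u,\mathcal{L}_n(\Delta))\le\|u-Q_nu\|\le\|u-Q_nu\|_{\frak{a}}=\|(E-Q_n)u\|_{\frak{a}}\to 0$.

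With $\delta(\mathcal{L}(\Delta),\mathcal{L}_n(\Delta))\to 0$ in hand, for large $n$ this gap is $<1$, whence $d=\dim\mathcal{L}(\Delta)\le\dim\mathcal{L}_n(\Delta)$; combined with the standing assumption $\dim\mathcal{L}_n(\Delta)\le d$ this forces $\dim\mathcal{L}_n(\Delta)=d$ for all large $n$. Equality of dimensions is exactly what licenses the gap inequality already invoked in Lemma \ref{poll}, namely $\delta(\mathcal{L}_n(\Delta),\mathcal{L}(\Delta))\le\delta(\mathcal{L}(\Delta),\mathcal{L}_n(\Delta))/(1-\delta(\mathcal{L}(\Delta),\mathcal{L}_n(\Delta)))$, see \cite[Lemma 213]{kato}, whose right-hand side tends to $0$. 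Hence $\delta(\mathcal{L}_n(\Delta),\mathcal{L}(\Delta))\to 0$, i.e.\ no pollution, completing the contrapositive. The step requiring most care is the semi-bounded case: Definition \ref{poll0} measures pollution in the norm of $\mathcal{H}$, whereas the strong convergence available there (Lemma \ref{l2}) lives in $\mathcal{H}_{\frak{a}}$, so the argument hinges on the one-sided comparison $\|\cdot\|\le\|\cdot\|_{\frak{a}}$ to reconcile the two norms, and on carrying out the dimension count with the orthogonality of $\mathcal{H}$ rather than that of $\mathcal{H}_{\frak{a}}$.
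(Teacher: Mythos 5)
Your proposal is correct and follows essentially the same route as the paper: the forward implication via a unit vector in $\mathcal{L}_{n_j}(\Delta)$ orthogonal to $\mathcal{L}(\Delta)$, and the converse via $\delta(\mathcal{L}(\Delta),\mathcal{L}_n(\Delta))\to 0$ from Lemma \ref{l1}/\ref{l2}, the resulting dimension count, and the gap inequality from \cite{kato} (you phrase this direction as a contrapositive where the paper argues by contradiction, which is only a cosmetic difference). Your explicit handling of the $\Vert\cdot\Vert\le\Vert\cdot\Vert_{\frak{a}}$ comparison in the semi-bounded case is a detail the paper leaves implicit, and it is correctly done.
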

\begin{proof}
If \eqref{poll2} holds then there exist non-zero vectors $u_{n_j}\in\mathcal{L}_{n_j}(\Delta)$ with $u_{n_j}\perp\mathcal{L}(\Delta)$. Then
\[
\delta(\mathcal{L}_{n_j}(\Delta),\mathcal{L}(\Delta))=1\quad\forall j\in\mathbb{N}.
\]
Hence spectral pollution occurs in $\Delta$ for the sequence $(\mathcal{L}_n)$.

If spectral pollution occurs in $\Delta$ for the sequence $(\mathcal{L}_n)$, then 
\begin{equation}\label{poll9}
\delta(\mathcal{L}_n(\Delta),\mathcal{L}(\Delta))\nrightarrow 0.
\end{equation}
We have, by Lemma \ref{l1} if $A$ is bounded and by Lemma \ref{l2} if $A$ is semi-bounded, 
$\delta(\mathcal{L}(\Delta),\mathcal{L}_n(\Delta))\to 0$.
Therefore 
\[\dim\mathcal{L}_n(\Delta)\ge\dim\mathcal{L}(\Delta)\quad\text{for all sufficiently large}\quad n\in\mathbb{N}.\]
If 
\[\dim\mathcal{L}_n(\Delta)=\dim\mathcal{L}(\Delta)=d<\infty\quad\text{for all sufficiently large}\quad n\in\mathbb{N},\]
then
\[
\delta(\mathcal{L}_n(\Delta),\mathcal{L}(\Delta)) \le 
\frac{\delta(\mathcal{L}(\Delta),\mathcal{L}_n(\Delta))}{1-\delta(\mathcal{L}(\Delta),\mathcal{L}_n(\Delta))}\to 0,
\]
which contradicts \eqref{poll9}. We deduce that \eqref{poll2} holds.
\end{proof}

\void{
\begin{lemma}\label{poll11}
If \eqref{poll1} does not converge to zero, then spectral pollution occurs in $\Delta$.
\end{lemma}
\begin{proof}
Let \eqref{poll1} hold, then there exists a subsequence $(n_j)_{j\in\mathbb{N}}$ and
\[
\mu_{n_j}\in\sigma(A,\mathcal{L}_{n_j})\quad\text{with}\quad\mu_{n_j}\to \mu\in\rho(A).
\]
Let $u_{n_j}$ be the corresponding Galerkin eigenvectors. We must show that
\begin{equation}\label{poll12}
\liminf\dist(u_{n_j},\mathcal{L}(\Delta))> 0.
\end{equation}
If $\sigma(A)\cap\Delta=\varnothing$, then \eqref{poll12} follows. We assume that $\sigma(A)\cap\Delta\ne\varnothing$. Without loss of generality, it will suffice to show that
\begin{equation}\label{poll13}
\dist(u_{n_j},\mathcal{L}(\Delta))\nrightarrow 0.
\end{equation}
Suppose instead that $u_{n_j}\to u\in\mathcal{L}(\Delta)$. Let $Ax=\lambda x$ for some $0 \ne x\in\mathcal{L}(\Delta)$ with $\langle u,x\rangle\ne 0$. Let $x_{n_j}\in\mathcal{L}_{n_j}$ be such that $\Vert x_{n_j}-x\Vert_{\frak{a}}\to 0$, then
\begin{align*}
0&=\frak{a}[u_{n_j},x_{n_j}] - \mu_{n_j}\langle u_{n_j},x_{n_j}\rangle\\
&=\frak{a}[u_{n_j},x_{n_j}-x] + \frak{a}[u_{n_j},x] - \mu_{n_j}\langle u_{n_j},x_{n_j}\rangle\\
&=\frak{a}[u_{n_j},x_{n_j}-x] + (\lambda- \mu_{n_j})\langle u_{n_j},x_{n_j}\rangle
\end{align*}
where
\begin{align*}
\vert \frak{a}[u_{n_j},x_{n_j}-x] \vert &\le \vert \langle u_{n_j},x_{n_j}-x\rangle_\frak{a}\vert + \vert (m-1)\langle u_{n_j},x_{n_j}-x\rangle\vert\\
&\le\Vert u_{n_j}\Vert_{\frak{a}}\Vert x_{n_j}-x\Vert_\frak{a} + \vert (m-1)\langle u_{n_j},x_{n_j}-x\rangle\vert\\
&\le\sqrt{b-m+1}\Vert x_{n_j}-x\Vert_\frak{a} + \vert (m-1)\langle u_{n_j},x_{n_j}-x\rangle\vert\\
&\to 0,
\end{align*}
and therefore $\langle u_{n_j},x_{n_j}\rangle\to 0$. Which is a contradiction since $\langle u_{n_j},x_{n_j}\rangle\to \langle u,x\rangle$.
\end{proof}
}

Our definition of spectral pollution fits naturally within the well known theory of spectral approximation using the Galerkin method. For example, if $A$ is bounded and spectral pollution does not occur in $\Delta$, then the sequence $(A_nP_n)$ is called a \emph{strongly stable} approximation of $A$ at $\sigma(A)\cap\Delta$. For strongly stable approximations the Galerkin method is extremely effective and well understood; see for example the excellent monograph \cite{chat}.

\section{Approximation of $\sigma(A)\cap\Delta$}

Throughout this section, $\mathcal{L}$ will denote a fixed and finite-dimensional subspace of $\mathcal{H}$. The orthogonal projection from $\mathcal{H}$ onto $\mathcal{L}$ is denoted by $P$.

\begin{lemma}\label{prelims}
The operator $EPE$ is self-adjoint and non-negative,
$\sigma(EPE)\subset[0,1]$, $\sigma_{\ess}(EPE)=\{0\}$
and $\sigma_{\dis}(EPE)$ consists of $\rank(PE)$ non-zero eigenvalues counted according to multiplicity. Furthermore,
if $EPEu = \mu u$ where $\mu\ne 0$, then $u\in\mathcal{L}(\Delta)$.
\end{lemma}
\begin{proof}
Evidently, $EPE$ is a self-adjoint operator and
\begin{equation}\label{triv}
\langle EPEv,v\rangle = \langle PEv,Ev\rangle = \Vert PEv\Vert^2 \le \Vert v\Vert^2\quad\forall v\in\mathcal{H}.
\end{equation}
The second and third assertions follow. That $\sigma_{\ess}(EPE)=\{0\}$ follows from the fact that $EPE$ is finite rank. 
From \eqref{triv} we deduce that $EPEv=0$ iff $PEv=0$ and therefore $\rank(EPE)=\rank(PE)$ and the fifth assertion follows. Finally, if 
$EPEu = \mu u$, then $EPEu = \mu Eu$ and hence $u\in\mathcal{L}(\Delta)$.  
\end{proof}

\void{
\begin{lemma}\label{l1b}
If $A$ is bounded, then
\[\Vert(I-B_n)E(\Delta)\Vert=\mathcal{O}\big(\delta(\cL(\Delta),\cL_n)\big)\quad\textrm{and}\quad\delta\big(\cL(\Delta),\cL_n(\Delta)\big)=\mathcal{O}\big(\delta(\cL(\Delta),\cL_n)\big).\]
\end{lemma}
\begin{proof}
Let $(A-\lambda_j)u = 0$ with $\Vert u\Vert=1$ and $\lambda_j\in\Delta$. Set $u_n=P_nu$, then
\begin{align*}
\Vert(A_n-\lambda_j)u_n\Vert 
=\Vert(P_nA-\lambda_j)u_n - P_n(A-\lambda_j) u\Vert\le\Vert A\Vert\delta(\cL(\Delta),\cL_n),
\end{align*}
and
\begin{align*}
\Vert(I-B_n)u_n\Vert^2 &\le\int_{\mathbb{R}\backslash\Delta}\frac{\vert\mu-\lambda_j\vert^2}{\dist[\lambda_j,\{a,b\}]^2}~d\langle (E_n)_\mu u_n,u_n\rangle\\
&\le\frac{1}{\dist[\lambda_j,\{a,b\}]^2}\int_{\mathbb{R}}\vert\mu-\lambda_j\vert^2~d\langle (E_n)_\mu u_n,u_n\rangle\\
&\le\frac{\Vert A\Vert^2\delta(\cL(\Delta),\cL_n)^2}{\dist(\lambda_j,\{a,b\})^2}.
\end{align*}
Therefore
\begin{align*}
\Vert(I- B_n)u\Vert &\le \Vert(I-B_n)\Vert + \Vert(I-B_n)(u-u_n)\Vert\\
&\le\left(\frac{\Vert A\Vert}{\dist(\lambda_j,\{a,b\})} + 1\right)\delta(\cL(\Delta),\cL_n),
\end{align*}
from which both assertions follow.
\end{proof}
}

\void{
\begin{definition}\label{def1}
\begin{align*}
\sigma(P,\mathcal{L}_\infty(\Delta)):=\big\{\mu\in\mathbb{R}: \text{ there exists a }&\text{sequence }(\mu_n)_{n\in\mathbb{N}}\text{ with }\\
&\vert\mu-\mu_n\vert\to0 
\text{ and }\mu_n\in\sigma(P,\mathcal{L}_n(\Delta))\big\}.
\end{align*}
\end{definition}
}

\begin{lemma}\label{thm1}
\begin{equation}\label{ds1}
\max_{\mu\in\sigma(EPE)\backslash\{0\}}\dist\big(\mu,\sigma(P,\mathcal{L}_n(\Delta))\big)\to 0
\end{equation}
and \begin{equation}\label{ds2}
\max_{\mu\in\sigma(P,\mathcal{L}_n(\Delta))}\dist\big(\mu,\sigma(EPE)\big)\to 0.
\end{equation}
\end{lemma}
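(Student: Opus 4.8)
The plan is to reduce both statements to the strong convergence $Q_n\to E$ in the norm of $\mathcal{H}$, and then argue by approximate eigenvectors for \eqref{ds1} and by compactness for \eqref{ds2}, exploiting crucially that $P$ has fixed finite rank. First I would record that $Q_n\to E$ strongly in $\mathcal{H}$ in both cases. When $A$ is bounded this is Lemma \ref{l1}. When $A$ is semi-bounded, Lemma \ref{l2} only gives $\Vert(Q_n-E)u\Vert_{\frak{a}}\to0$ for $u\in\mathcal{H}_{\frak{a}}$; but $\Vert w\Vert\le\Vert w\Vert_{\frak{a}}$ for every $w\in\mathcal{H}_{\frak{a}}$, the space $\mathcal{H}_{\frak{a}}$ is dense in $\mathcal{H}$, and $\sup_n\Vert Q_n\Vert\le1$, so a standard density/uniform-boundedness argument upgrades this to $\Vert(Q_n-E)u\Vert\to0$ for all $u\in\mathcal{H}$. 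I would also note that $\sigma(P,\mathcal{L}_n(\Delta))$ is precisely the spectrum of the self-adjoint compression $w\mapsto Q_nPw$ on $\mathcal{L}_n(\Delta)$, hence contained in $[0,1]$, and that a Galerkin pair is characterised by $u_n\in\mathcal{L}_n(\Delta)$ with $Q_nPu_n=\mu_nu_n$. From here the two cases are treated identically, since only strong $\mathcal{H}$-convergence of $Q_n$ and boundedness of $P$ are used.

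For \eqref{ds1}, fix a non-zero eigenvalue $\mu$ of $EPE$ with unit eigenvector $u$. By Lemma \ref{prelims}, $u\in\mathcal{L}(\Delta)$, so $Eu=u$ and $EPEu=EPu=\mu u$. Taking the trial vector $u_n:=Q_nu\in\mathcal{L}_n(\Delta)$, strong convergence gives $\Vert u_n\Vert\to1$ and, since $Q_nPQ_n\to EPE$ strongly (a consequence of $Q_n\to E$ strongly and the boundedness of $P$),
\[
(Q_nPQ_n-\mu Q_n)u=(Q_nPQ_nu-EPEu)-\mu(Q_n-E)u\to0.
\]
The left-hand side is the residual of the pair $(\mu,u_n)$ for the compression, so its self-adjointness gives $\dist(\mu,\sigma(P,\mathcal{L}_n(\Delta)))\le\Vert(Q_nPQ_n-\mu Q_n)u\Vert/\Vert u_n\Vert\to0$. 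As $EPE$ has only $\rank(PE)<\infty$ non-zero eigenvalues, the maximum over them converges to zero, which is \eqref{ds1}.

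For \eqref{ds2}, I would argue by contradiction. Pick $\mu_n\in\sigma(P,\mathcal{L}_n(\Delta))$ attaining the maximum; these values lie in $[0,1]$, so if \eqref{ds2} failed we could pass to a subsequence with $\mu_n\to\mu$ and $\dist(\mu,\sigma(EPE))\ge\varepsilon>0$. Since $0\in\sigma_{\ess}(EPE)$, this forces $\mu>0$. Let $u_n\in\mathcal{L}_n(\Delta)$, $\Vert u_n\Vert=1$, with $Q_nPu_n=\mu_nu_n$. Now $Pu_n$ lies in the fixed finite-dimensional space $\mathcal{L}$ and is bounded, so along a further subsequence $Pu_n\to w\in\mathcal{L}$; using $\mu_n\to\mu>0$ I would write $u_n=\mu_n^{-1}Q_nPu_n$ and combine strong convergence of $Q_n$ with $Pu_n\to w$ to obtain $u_n\to\mu^{-1}Ew=:u$, whence $\Vert u\Vert=1$ and $Eu=u$. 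Continuity of $P$ then yields $Pu=\lim Pu_n=w$, hence $EPEu=EPu=Ew=\mu u$, so $\mu\in\sigma(EPE)$, contradicting $\dist(\mu,\sigma(EPE))\ge\varepsilon$. This proves \eqref{ds2}.

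The one genuinely delicate point is the semi-bounded case: the eigenproblems defining $EPE$ and $\sigma(P,\mathcal{L}_n(\Delta))$ are posed in the ambient inner product, while Lemma \ref{l2} supplies convergence only in the stronger norm $\Vert\cdot\Vert_{\frak{a}}$, and $P$ need not map $\mathcal{H}_{\frak{a}}$ into itself. The step I expect to verify most carefully is therefore the upgrade to strong $\mathcal{H}$-convergence of $Q_n$ on all of $\mathcal{H}$; once that is secured, the fixed finite rank of $P$ drives both the residual estimate for \eqref{ds1} and the compactness extraction for \eqref{ds2}.
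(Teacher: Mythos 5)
Your proof is correct, and for \eqref{ds1} it is essentially the paper's argument: test the compression $Q_nP|_{\mathcal{L}_n(\Delta)}$ against the trial vector $Q_nu$, observe that the residual $Q_nPQ_nu-\mu Q_nu\to 0$, and invoke self-adjointness plus the finiteness of $\sigma(EPE)\setminus\{0\}$. For \eqref{ds2} you take a genuinely different route. The paper first upgrades to \emph{operator-norm} convergence $\Vert Q_nPQ_n-EPE\Vert\to 0$ (writing $P$ as a finite sum of rank-one terms, so that strong convergence of $Q_n$ suffices), and then bounds $\dist(\mu_n,\sigma(EPE))\le\Vert(EPE-Q_nPQ_n)u_n\Vert\le\Vert EPE-Q_nPQ_n\Vert$ directly by self-adjointness of $EPE$; this is shorter, uniform over $\sigma(P,\mathcal{L}_n(\Delta))$ by construction, and yields a quantitative rate in terms of $\Vert Q_nPQ_n-EPE\Vert$. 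You instead argue by contradiction, using compactness of bounded sequences in the fixed finite-dimensional space $\mathcal{L}$ to extract $Pu_n\to w$ and reconstruct an eigenvector $u=\mu^{-1}Ew$ of $EPE$; this avoids proving norm convergence of $Q_nPQ_n$ but is purely qualitative. Both arguments lean on the finite rank of $P$ in the same essential way. One point in your favour: you explicitly justify the upgrade of $Q_n\to E$ from strong convergence in $\mathcal{H}_{\frak{a}}$ (Lemma \ref{l2}) to strong convergence in $\mathcal{H}$ via $\Vert\cdot\Vert\le\Vert\cdot\Vert_{\frak{a}}$, density of $\Dom(\frak{a})$ and $\Vert Q_n\Vert\le 1$. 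This step is genuinely needed in the semi-bounded case (the vectors $Pv$ range over $\mathcal{L}$, which is not assumed to lie in $\mathcal{H}_{\frak{a}}$), and the paper uses it only implicitly; making it explicit is a small but real improvement in rigour.
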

\begin{proof}
Let $\mu\in\sigma(EPE)\backslash\{0\}$. By Lemma \ref{prelims}, $\mu\in\sigma_{\dis}(EPE)$
and the corresponding eigenspace is a subset of $\mathcal{L}(\Delta)$. Let $u\in\mathcal{L}(\Delta)$ with
$EPEu = \mu u$. It follows, from Lemma \ref{l1} if $A$ is bounded and from Lemma \ref{l2} if $A$ is semi-bounded, that
\[
Q_nu\to u\quad\text{and}\quad Q_nPQ_nu\to EPEu = \mu u,
\]
hence
$Q_nPQ_nu-\mu Q_nu\to 0$. Then \eqref{ds1} follows from the fact that $Q_nP|_{\mathcal{L}_n(\Delta)}$ is a self-adjoint operator and $EPE$ is finite rank.

Let $(\mu_n)$ be a sequence with $\mu_n\in\sigma(P,\mathcal{L}_n(\Delta))$.
There are normalised vectors $u_n\in\mathcal{L}_n(\Delta)$ with
$Q_nPu_n = \mu_nu_n$.
From  Lemma \ref{l1} if $A$ is bounded and from Lemma \ref{l2} if $A$ is semi-bounded, and the fact that $P$ is finite-rank, we deduce that 
\[
\Vert Q_nPQ_n - EPE\Vert\to 0.\]
Hence
\begin{align*}
EPEu_n - \mu_n u_n &=(EPE-Q_nP)u_n  +Q_nPu_n  - \mu_n u_n\\
&=  (EPE-Q_nPQ_n)u_n \to 0.
\end{align*}
Then \eqref{ds2} follows from the fact that $EPE$ is a self-adjoint operator and  \[\rank(Q_nP|_{\mathcal{L}_n(\Delta)})\le\rank(P)<\infty.\]
\end{proof}

\void{
In Definition \ref{def1}, we could have defined the limit set to be those $\mu\in\mathbb{R}$ for which $\mu_{n_j}\to\mu$ and $\mu_{n_j}\in\sigma(P,\mathcal{L}_{n_j}(\Delta))$ for some subsequence $n_j$. This appears to be a more general definition giving rise to a possibly larger set. However, the following lemma shows that this is not the case.
\begin{lemma}
Let $(n_j)_{j\in\mathbb{N}}$ be an increasing sequence of natural numbers. If $\mu_{n_j}\in\sigma(P,\mathcal{L}_{n_j}(\Delta))$ for each $j\in\mathbb{N}$ and $\mu_{n_j}\to\mu$, then $\mu\in\sigma(P,\mathcal{L}_\infty(\Delta))$.
\end{lemma}
\begin{proof}
We have a sequence of normalised vectors $(v_{n_j})_{j\in\mathbb{N}}$ with $v_{n_j}\in\mathcal{L}_{n_j}(\Delta)$ and
\[
Q_{n_j}Pv_{n_j} = \mu_{n_j}v_{n_j}\quad\text{where}\quad\mu_{n_j}\to\mu.
\]
Then, similarly to the proof of Lemma \ref{thm1}, we deduce that
\begin{align*}
EPEv_{n_j} - \mu v_{n_j} \to 0
\end{align*}
and therefore that $\mu\in\sigma(EPE)$. Then $\mu\in\sigma(P,\mathcal{L}_\infty(\Delta))$ by Lemma \ref{thm1}.
\end{proof}
}

\begin{theorem}\label{espaces}
Let $\delta(\mathcal{L}(\Delta),\mathcal{L})<1$. Then
\begin{equation}\label{approxed00}
0<\min\sigma_{\dis}(EPE)=:\gamma.
\end{equation}
Let $\mathcal{M}_n$ be the spectral subspace associated to $Q_nP|_{\mathcal{L}_n(\Delta)}$ and the interval $[\gamma/2,1]$. Then
\begin{equation}\label{approxed1}
\hat{\delta}(\mathcal{M}_n,\mathcal{L}(\Delta))\to 0.
\end{equation}
If $A$ is bounded, then
\begin{equation}\label{approxed2}
d_{H}(\sigma(A,\mathcal{M}_n),\{\lambda_1,\dots,\lambda_d\})\to 0.
\end{equation}
If $A$ is semi-bounded, then
\begin{equation}\label{approxed3}
\hat{\delta}_{\frak{a}}(\mathcal{M}_n,\mathcal{L}(\Delta))\to 0\quad\text{and}\quad d_{H}(\sigma(A,\mathcal{M}_n),\{\lambda_1,\dots,\lambda_d\})\to 0.
\end{equation}
\end{theorem}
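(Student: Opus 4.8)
The plan is to reduce everything to the norm-convergent approximation $Q_nPQ_n\to EPE$ (established in the proof of Lemma~\ref{thm1}) together with the spectral picture of $EPE$ from Lemma~\ref{prelims}. First I would settle \eqref{approxed00} and identify the limiting projection. By hypothesis $\delta(\mathcal{L}(\Delta),\mathcal{L})=:\delta<1$, hence $\|Pu\|^2=\|u\|^2-\dist(u,\mathcal{L})^2\ge(1-\delta^2)\|u\|^2$ for every $u\in\mathcal{L}(\Delta)$, so $P$ is injective on $\mathcal{L}(\Delta)$ and $\rank(PE)=d$. Since $EPE$ maps $\mathcal{L}(\Delta)$ into itself and is bounded below there by $1-\delta^2$, Lemma~\ref{prelims} gives that $EPE$ has exactly $d$ nonzero eigenvalues, all lying in $[1-\delta^2,1]$; thus $\gamma=\min\sigma_{\dis}(EPE)\ge1-\delta^2>0$, which is \eqref{approxed00}. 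The same lemma shows the eigenvectors for these eigenvalues lie in $\mathcal{L}(\Delta)$ and hence span it, so the spectral projection of $EPE$ for the interval $[\gamma/2,1]$ is exactly $E$.

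Next I would pass to the approximants. As an operator on $\mathcal{H}$, $Q_nPQ_n$ is block diagonal with the $0$ block on $\mathcal{L}_n(\Delta)^\perp$, so its spectral projection for $[\gamma/2,1]$ is the orthogonal projection $R_n$ onto $\mathcal{M}_n$. Now $\gamma/2$ lies in the gap $(0,\gamma)$ of $\sigma(EPE)\subset\{0\}\cup[\gamma,1]$, and by \eqref{ds2} every $\mu\in\sigma(P,\mathcal{L}_n(\Delta))$ is eventually within $o(1)$ of $\sigma(EPE)$, so for large $n$ no Galerkin eigenvalue lies in $[\gamma/4,3\gamma/4]$; that is, $\gamma/2$ sits in a common spectral gap of $EPE$ and of $Q_nPQ_n$. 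Combined with $\|Q_nPQ_n-EPE\|\to0$, the standard Riesz-projection argument for norm-convergent self-adjoint operators whose endpoints avoid the spectrum gives $\|R_n-E\|\to0$. This yields \eqref{approxed1}, and since $\|R_n-E\|<1$ eventually it also forces $\dim\mathcal{M}_n=d$ for all large $n$.

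For the form gap I would use that on each $\mathcal{L}_n(\Delta)$ the two norms are uniformly comparable. The $\mathcal{H}$-orthonormal Galerkin eigenbasis $u_{n,1},\dots,u_{n,d_n}$ is $\frak{a}$-orthogonal with $\|u_{n,j}\|_{\frak{a}}^2=\mu_{n,j}-m+1\in[1,b-m+1]$ (as $m\le\mu_{n,j}\le b$), so $\|w\|\le\|w\|_{\frak{a}}\le\sqrt{b-m+1}\,\|w\|$ for every $w\in\mathcal{L}_n(\Delta)$. Fix an $\mathcal{H}$-orthonormal eigenbasis $f_1,\dots,f_d$ of $\mathcal{L}(\Delta)$. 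By Lemma~\ref{l2}, $\|Q_nf_j-f_j\|_{\frak{a}}\to0$; moreover $R_nf_j\in\mathcal{M}_n$ and $R_nf_j-Q_nf_j=-(Q_n-R_n)f_j\in\mathcal{L}_n(\Delta)$ tends to $0$ in $\mathcal{H}$ (both $Q_nf_j$ and $R_nf_j$ converge to $f_j$ in $\mathcal{H}$), hence to $0$ in $\|\cdot\|_{\frak{a}}$ by the comparison. Therefore $\|R_nf_j-f_j\|_{\frak{a}}\to0$, which gives $\delta_{\frak{a}}(\mathcal{L}(\Delta),\mathcal{M}_n)\to0$; as the dimensions now agree, the gap-reversal estimate used in the proof of Lemma~\ref{poll} upgrades this to $\hat{\delta}_{\frak{a}}(\mathcal{M}_n,\mathcal{L}(\Delta))\to0$.

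Finally I would read off the eigenvalue convergence from the convergent basis. With $g_j^n:=R_nf_j$ a basis of $\mathcal{M}_n$ and $g_j^n\to f_j$ in form norm, we get $\frak{a}[g_i^n,g_j^n]\to\frak{a}[f_i,f_j]=\lambda_i\delta_{ij}$ and $\langle g_i^n,g_j^n\rangle\to\delta_{ij}$, so the generalized Galerkin eigenproblem on $\mathcal{M}_n$ has stiffness matrix tending to $\operatorname{diag}(\lambda_1,\dots,\lambda_d)$ and mass matrix tending to $I$; continuity of the eigenvalues of the pencil gives $d_H(\sigma(A,\mathcal{M}_n),\{\lambda_1,\dots,\lambda_d\})\to0$, completing \eqref{approxed3}. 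The bounded case \eqref{approxed2} is identical, with $\|\cdot\|_{\frak{a}}$ everywhere equivalent to $\|\cdot\|$ and Lemma~\ref{l1} in place of Lemma~\ref{l2}. I expect the third step to be the crux: because $(A-m+1)^{\frac{1}{2}}$ is unbounded, $\mathcal{H}$-smallness of $(Q_n-R_n)f_j$ does not control its form norm on its own, and the argument closes only because this difference is trapped in $\mathcal{L}_n(\Delta)$, where the Galerkin eigenvalues lying in $\Delta$ make the two norms uniformly equivalent.
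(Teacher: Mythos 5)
Your proposal is correct and follows essentially the same route as the paper: establish $\rank(PE)=d$ so that $\mathcal{L}(\Delta)$ is the spectral subspace of $EPE$ for $[\gamma,1]$, use the norm convergence $\Vert Q_nPQ_n-EPE\Vert\to0$ and Riesz projections to get \eqref{approxed1}, and then upgrade to the form gap via the uniform equivalence of $\Vert\cdot\Vert$ and $\Vert\cdot\Vert_{\frak{a}}$ on $\mathcal{L}_n(\Delta)$ (the paper phrases this as $\Vert\hat{A}_n\Vert\le b$, you as the eigenvalue bounds $\mu_{n,j}-m+1\in[1,b-m+1]$) followed by the gap-reversal inequality. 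Your closing step via convergent stiffness and mass matrices just makes explicit what the paper leaves implicit, and your observation that the crux is the trapping of $v_n-Q_nv$ inside $\mathcal{L}_n(\Delta)$ is exactly the point of the paper's argument.
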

\begin{proof}
Since $\delta(\mathcal{L}(\Delta),\mathcal{L})<1$, we have
\[
\Vert(I-P)u\Vert<\Vert u\Vert\quad\forall u\in\mathcal{L}(\Delta)\backslash\{0\}.
\]
We deduce that $\rank(PE)=d$. Then, by Lemma \ref{prelims}, $\sigma_{\dis}(EPE)$ consists of $d$ (repeated) positive eigenvalues from which \eqref{approxed00} follows. Furthermore, all corresponding eigenvectors belong to $\mathcal{L}(\Delta)$. Hence, $\mathcal{L}(\Delta)$ is the spectral subspace corresponding to $EPE$ and the interval $[\gamma,1]$. Let $\Gamma$ be the circle with centre $(1+\gamma)/2$ and radius $1/2$, then $\Vert EPEu-\zeta u\Vert\ge\gamma/2\Vert u\Vert$ for all $\zeta\in\Gamma$ and $u\in\mathcal{H}$. Using Lemma \ref{thm1} and the self-adjointness of the operators $Q_nPQ_n$,
\[
\Gamma\subset\rho(Q_nPQ_n)\quad\text{and}\quad\max_{\zeta\in\Gamma}\Vert(Q_nPQ_n - \zeta)^{-1}\Vert\le \frac{4}{\gamma}
\]
for all sufficiently large $n\in\mathbb{N}$. Let $F_n$ be the orthogonal projection from $\mathcal{H}$ onto $\mathcal{M}_n$, then
\begin{align*}
\Vert E - F_n\Vert &= \left\Vert-\frac{1}{2\pi i}\int_\Gamma (EPE - \zeta)^{-1} -  (Q_nPQ_n - \zeta)^{-1} ~d\zeta\right\Vert\\
&\le\frac{4\Vert Q_nPQ_n - EPE\Vert}{\gamma^2}\to 0,
\end{align*} 
and \eqref{approxed1} follows. If $A$ is bounded, then  \eqref{approxed2} follows immediately from  \eqref{approxed1}. Let $A$ be semi-bounded and
denote by $\hat{A}_n$ the restriction of $A_n$ to $\mathcal{L}_n(\Delta)$. Then $\hat{A}_n$ is a self-adjoint operator with $\Vert\hat{A}_n\Vert\le b$ for all $n\in\mathbb{N}$. Let $v\in\mathcal{L}(\Delta)$ and $\Vert v\Vert_{\frak{a}}=1$. It follows, from \eqref{approxed1}, that there exists a sequence of vectors  $(v_n)$ with
\[
\Vert v_n - v\Vert\to 0\quad\text{and}\quad v_n\in\mathcal{M}_n.
\]
Then
\[
\Vert v_n - v\Vert_{\frak{a}} \le \Vert v_n - Q_nv\Vert_{\frak{a}} + \Vert Q_nv- v\Vert_{\frak{a}}
\]
where $\Vert Q_nv- v\Vert_{\frak{a}}\to 0$ by Lemma \ref{l2}, and
\begin{align*}
\Vert v_n - Q_nv\Vert_{\frak{a}}^2 &= \frak{a}[v_n-Q_nv] + (1-m)\Vert v_n-Q_nv\Vert^2\\
&=\langle A_n(v_n-Q_nv),v_n-Q_nv\rangle+ (1-m)\Vert v_n-Q_nv\Vert^2\\
&=\langle \hat{A}_n(v_n-Q_nv),v_n-Q_nv\rangle+ (1-m)\Vert v_n-Q_nv\Vert^2\\
&\le\vert\langle \hat{A}_n(v_n-Q_nv),v_n-Q_nv\rangle\vert+ (1-m)\Vert v_n-Q_nv\Vert^2\\
&\le\Vert\hat{A}_n\Vert\Vert v_n-Q_nv)\Vert^2+ (1-m)\Vert v_n-Q_nv\Vert^2\\
&\le(b+1-m)\Vert v_n-Q_nv\Vert^2\to 0.
\end{align*}
Since $\dim\mathcal{L}(\Delta)=d<\infty$, we deduce that $\delta_{\frak{a}}(\mathcal{L}(\Delta),\mathcal{M}_n)\to 0$.
It follows, from \eqref{approxed1}, that $\dim\mathcal{M}_n=\dim\mathcal{L}(\Delta)$ for all sufficiently large $n\in\mathbb{N}$, and therefore that
\[
\delta_{\frak{a}}(\mathcal{M}_n,\mathcal{L}(\Delta)) \le \frac{\delta_{\frak{a}}(\mathcal{L}(\Delta),\mathcal{M}_n)}{1-\delta_{\frak{a}}(\mathcal{L}(\Delta),\mathcal{M}_n)}.
\]
Both assertions in \eqref{approxed3} follow.
\end{proof}

\begin{remark}
The condition $\delta(\mathcal{L}(\Delta),\mathcal{L})<1$ is extremely mild. We do not require $\mathcal{L}$ to approximate $\mathcal{L}(\Delta)$ in any meaningful sense, we require only that there be no non-zero vector $u\in\mathcal{L}(\Delta)$ with $u\perp\mathcal{L}$. Put another way, we require $\range(E|_{\mathcal{L}}) = \mathcal{L}(\Delta)$.
\end{remark}

The following corollary demonstrates a nice feature of $\sigma(A,\mathcal{M}_n)$: if spectral pollution does not occur in $\Delta$ for the sequence $(\mathcal{L}_n)$, then approximating $\sigma(A)\cap\Delta$ with $\sigma(A,\mathcal{M}_n)$ is equivalent to using the Galerkin method. 

\begin{corollary}\label{espaces3}
Let $\delta(\mathcal{L}(\Delta),\mathcal{L})<1$ and $\delta(\mathcal{L}_n(\Delta),\mathcal{L}(\Delta))\to 0$. Then 
\begin{equation}\label{approxed4}
\sigma(A,\mathcal{M}_n) = \sigma(A,\mathcal{L}_n)\cap\Delta\quad\text{for.all sufficiently large}\quad n\in\mathbb{N}.
\end{equation}
\end{corollary}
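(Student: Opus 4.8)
The plan is to show that, under the two hypotheses, the subspace $\mathcal{M}_n$ and the Galerkin eigenspace $\mathcal{L}_n(\Delta)$ in fact coincide for all sufficiently large $n$; once this is established the claimed spectral identity \eqref{approxed4} follows immediately from the invariance of $\mathcal{L}_n(\Delta)$ under $A_n$. The whole argument is essentially a dimension-counting collapse, so the work is in pinning down two dimensions and recording one definitional containment.

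First I would note the containment $\mathcal{M}_n\subseteq\mathcal{L}_n(\Delta)$, which holds by construction: $\mathcal{M}_n$ is a spectral subspace of the self-adjoint operator $Q_nP|_{\mathcal{L}_n(\Delta)}$, and that operator acts on $\mathcal{L}_n(\Delta)$. Next I would fix the two relevant dimensions. Since $\delta(\mathcal{L}(\Delta),\mathcal{L})<1$, Theorem \ref{espaces} applies and \eqref{approxed1} gives $\hat{\delta}(\mathcal{M}_n,\mathcal{L}(\Delta))\to 0$; once this gap drops below $1$ the two subspaces must have equal dimension, so $\dim\mathcal{M}_n=\dim\mathcal{L}(\Delta)=d$ for all large $n$. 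On the other side, the hypothesis $\delta(\mathcal{L}_n(\Delta),\mathcal{L}(\Delta))\to 0$ is exactly the statement that spectral pollution does not occur in $\Delta$ (Definition \ref{poll0}); by Lemma \ref{poll10} this rules out \eqref{poll2}, so no subsequence can satisfy $\dim\mathcal{L}_{n_j}(\Delta)>d$, while Lemma \ref{l1} (bounded case) or Lemma \ref{l2} (semi-bounded case) forces $\delta(\mathcal{L}(\Delta),\mathcal{L}_n(\Delta))\to 0$ and hence $\dim\mathcal{L}_n(\Delta)\ge d$ for large $n$. Combining the two inequalities yields $\dim\mathcal{L}_n(\Delta)=d$ for all sufficiently large $n$.

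With $\dim\mathcal{M}_n=\dim\mathcal{L}_n(\Delta)=d$ and $\mathcal{M}_n\subseteq\mathcal{L}_n(\Delta)$, I conclude $\mathcal{M}_n=\mathcal{L}_n(\Delta)$ for all large $n$. It then remains only to identify the Galerkin eigenvalues on this common subspace. Since $\mathcal{L}_n(\Delta)$ is the spectral subspace of the self-adjoint operator $A_n$ for the interval $\Delta$, it is invariant under $A_n$; writing $\hat{A}_n$ for the restriction (as in the proof of Theorem \ref{espaces}), one has $\frak{a}[u,v]=\langle A_nu,v\rangle=\langle\hat{A}_nu,v\rangle$ for all $u,v\in\mathcal{L}_n(\Delta)$. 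Thus the Galerkin eigenvalue problem on $\mathcal{L}_n(\Delta)$ reduces to the ordinary eigenvalue problem for $\hat{A}_n$, whose spectrum is precisely $\sigma(A,\mathcal{L}_n)\cap\Delta$, giving $\sigma(A,\mathcal{M}_n)=\sigma(A,\mathcal{L}_n)\cap\Delta$. The one step requiring a little care, and the place where the argument is most delicate, is the lower bound $\dim\mathcal{L}_n(\Delta)\ge d$: it does not come from the hypothesis directly but from the strong convergence results of Section \ref{galsub}, and it is exactly what upgrades the one-sided "no pollution" statement into the exact dimension count that closes the gap. Everything else is bookkeeping: the containment is definitional, the upper bound on $\dim\mathcal{L}_n(\Delta)$ is a restatement of Lemma \ref{poll10}, and the final reduction to $\hat{A}_n$ uses only self-adjointness and invariance already exploited above.
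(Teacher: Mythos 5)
Your proof is correct and follows essentially the same route as the paper: the paper's one-line argument is precisely the dimension-counting collapse you describe, deducing $\dim\mathcal{M}_n=d$ from \eqref{approxed1} and concluding $\mathcal{M}_n=\mathcal{L}_n(\Delta)$. You have merely made explicit the steps the paper leaves implicit, namely the containment $\mathcal{M}_n\subseteq\mathcal{L}_n(\Delta)$, the eventual equality $\dim\mathcal{L}_n(\Delta)=d$ coming from the no-pollution hypothesis together with Lemmas \ref{l1}/\ref{l2}, and the identification of $\sigma(A,\mathcal{M}_n)$ with $\sigma(\hat{A}_n)=\sigma(A,\mathcal{L}_n)\cap\Delta$.
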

\begin{proof}
It follows, from  \eqref{approxed1}, that  $\dim(\mathcal{M}_n)=d$ for all sufficiently large $n\in\mathbb{N}$, and hence $\mathcal{M}_n=\mathcal{L}_n(\Delta)$.
\end{proof}

The following corollary shows that the condition $\delta(\mathcal{L}(\Delta),\mathcal{L})<1$ can be relaxed when $\Delta$ contains only one eigenvalue. Knowing the latter is  essential if applying the quadratic methods described in \cite{DP,zim}, it is also essential for obtaining an eigenvalue enclosure using the second order relative spectrum; see for example \cite[Remark 2.3]{me2}.

\begin{corollary}\label{espaces4}
Let $\Delta\cap\sigma(A)=\lambda$ and $\rank(PE)\ne 0$. Then 
\begin{equation}\label{approxed5a}
0<\min\sigma_{\dis}(EPE)=:\gamma.
\end{equation}
Let $\mathcal{M}$ be the eigenspace corresponding to $EPE$ and the interval $[\gamma,1]$. Then
\begin{equation}\label{approxed5}
\dim\mathcal{M}=\rank(PE),\quad\mathcal{M}\subset\mathcal{L}(\Delta)\quad\text{and}\quad \hat{\delta}(\mathcal{M}_n,\mathcal{M})\to 0.
\end{equation}
If $A$ is bounded, then
\begin{equation}\label{approxed6}
d_{H}(\sigma(A,\mathcal{M}_n),\lambda)\to 0.
\end{equation}
If $A$ is semi-bounded, then
\begin{equation}\label{approxed7}
\hat{\delta}_{\frak{a}}(\mathcal{M}_n,\mathcal{M})\to 0\quad\text{and}\quad d_{H}(\sigma(A,\mathcal{M}_n),\lambda)\to 0.
\end{equation}
\end{corollary}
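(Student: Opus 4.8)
The plan is to recycle the proof of Theorem \ref{espaces}, observing that its contour argument needs only the separation $\gamma:=\min\sigma_{\dis}(EPE)>0$ and not the stronger hypothesis $\delta(\mathcal{L}(\Delta),\mathcal{L})<1$; the single-eigenvalue assumption will enter only at the final, eigenvalue-convergence, step. First I would settle \eqref{approxed5a} together with the structural part of \eqref{approxed5}. Because $\rank(PE)\neq0$, Lemma \ref{prelims} tells us that $\sigma_{\dis}(EPE)$ consists of exactly $\rank(PE)$ strictly positive eigenvalues lying in $(0,1]$; hence $\gamma>0$, the interval $[\gamma,1]$ captures all of them, so $\dim\mathcal{M}=\rank(PE)$, and, since every eigenvector of $EPE$ with nonzero eigenvalue lies in $\mathcal{L}(\Delta)$, also $\mathcal{M}\subset\mathcal{L}(\Delta)$.

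For the gap statement $\hat{\delta}(\mathcal{M}_n,\mathcal{M})\to0$ I would take $\Gamma$ to be the circle with centre $(1+\gamma)/2$ and radius $1/2$. As $\gamma>0$, $\Gamma$ encloses $[\gamma,1]$ and excludes $0$, so $E_{\mathcal{M}}:=-\frac{1}{2\pi i}\int_\Gamma(EPE-\zeta)^{-1}\,d\zeta$ is the orthogonal projection onto $\mathcal{M}$. By Lemma \ref{thm1} the eigenvalues of $Q_nP|_{\mathcal{L}_n(\Delta)}$ accumulate only near $\{0\}\cup\sigma_{\dis}(EPE)$, hence for large $n$ none sit near $\gamma/2$; thus $\Gamma\subset\rho(Q_nPQ_n)$ with $\max_{\zeta\in\Gamma}\|(Q_nPQ_n-\zeta)^{-1}\|\le 4/\gamma$, the Riesz projection of $Q_nPQ_n$ over $\Gamma$ equals the orthogonal projection $F_n$ onto $\mathcal{M}_n$, and $\|Q_nPQ_n-EPE\|\to0$ forces $\|E_{\mathcal{M}}-F_n\|\to0$. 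This is precisely the estimate in Theorem \ref{espaces} and yields the third claim of \eqref{approxed5}.

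The remaining assertions then follow as in Theorem \ref{espaces}, with $\mathcal{M}$ replacing $\mathcal{L}(\Delta)$. For $\hat{\delta}_{\frak{a}}(\mathcal{M}_n,\mathcal{M})\to0$, given $v\in\mathcal{M}$ with $\|v\|_{\frak{a}}=1$ I would produce $v_n\in\mathcal{M}_n$ with $\|v_n-v\|\to0$ from \eqref{approxed5} and run the identical bound $\|v_n-Q_nv\|_{\frak{a}}^2\le(b+1-m)\|v_n-Q_nv\|^2\to0$; this is valid because $\mathcal{M}\subset\mathcal{L}(\Delta)\subset\mathcal{H}_{\frak{a}}$, so Lemma \ref{l2} applies to $v$, and matching dimensions together with the Kato inequality upgrade this to the two-sided form-gap. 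For $d_H(\sigma(A,\mathcal{M}_n),\lambda)\to0$ I would exploit $\mathcal{M}\subset\mathcal{L}(\Delta)=\Ker(A-\lambda)$: the Galerkin eigenvalues on $\mathcal{M}_n$ are bracketed by the extreme Rayleigh quotients $\frak{a}[u]/\|u\|^2$ over $u\in\mathcal{M}_n$, and any such $u$ is close—in $\|\cdot\|$ when $A$ is bounded, in $\|\cdot\|_{\frak{a}}$ when $A$ is semi-bounded—to some $w\in\mathcal{M}$ satisfying $\frak{a}[w,\cdot]=\lambda\langle w,\cdot\rangle$, so these quotients converge uniformly to $\lambda$.

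The main obstacle is conceptual rather than computational: one must recognise that $\Gamma$ separates the nonzero spectrum of $EPE$ from $0$ as soon as $\gamma>0$, so the whole Riesz-projection machinery of Theorem \ref{espaces} transfers unchanged once $\mathcal{L}(\Delta)$ is replaced by $\mathcal{M}$. The single-eigenvalue hypothesis $\Delta\cap\sigma(A)=\lambda$ is indispensable precisely at the last step: when $\rank(PE)<d$ the contour reconstructs only the proper subspace $\mathcal{M}\subsetneq\mathcal{L}(\Delta)$, yet because $\Delta$ harbours the single spectral value $\lambda$, approximating any nonzero subspace of $\Ker(A-\lambda)$ still forces every Galerkin eigenvalue to $\lambda$.
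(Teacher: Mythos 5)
Your argument follows the paper's proof essentially verbatim: \eqref{approxed5a} and the first two claims of \eqref{approxed5} from Lemma \ref{prelims}, the Riesz-projection estimate $\Vert F-F_n\Vert\le 4\Vert Q_nPQ_n-EPE\Vert/\gamma^2$ over the same contour $\Gamma$, and the transfer of the form-norm bound from Theorem \ref{espaces} with $\mathcal{M}$ in place of $\mathcal{L}(\Delta)$. The only difference is that you spell out the final eigenvalue-convergence step (via Rayleigh quotients and $\mathcal{M}\subset\Ker(A-\lambda)$), which the paper leaves implicit; this is correct and is indeed where the hypothesis $\Delta\cap\sigma(A)=\lambda$ is used.
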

\begin{proof}
The assertion \eqref{approxed5a} and the first two assertions in \eqref{approxed5} follow immediately from Lemma \ref{prelims}. Let $F$ and $F_n$ be the orthogonal projections from $\mathcal{H}$ onto $\mathcal{M}$ and $\mathcal{M}_n$, respectively. 
Then arguing similarly to the proof of Theorem \ref{espaces}
\begin{align*}
\Vert F - F_n\Vert &= \left\Vert-\frac{1}{2\pi i}\int_\Gamma (EPE - \zeta)^{-1} -  (Q_nPQ_n - \zeta)^{-1} ~d\zeta\right\Vert\\
&\le\frac{4\Vert Q_nPQ_n - EPE\Vert}{\gamma^2}\to 0.
\end{align*} 
The right hand side of \eqref{approxed5} follows. If $A$ is bounded then  \eqref{approxed6} follows immediately from  \eqref{approxed5}. Let $A$ be semi-bounded. Let $v\in\mathcal{M}$ and $\Vert v\Vert_{\frak{a}}=1$. It follows, from \eqref{approxed5}, that there exists a sequence of vectors  $(v_n)$ with
\[
\Vert v_n - v\Vert\to 0\quad\text{and}\quad v_n\in\mathcal{M}_n.
\]
That $\Vert v_n - v\Vert_{\frak{a}}\to 0$ may be proved in precisely the same way as in the proof of Theorem \ref{espaces}. Both assertions in \eqref{approxed7} follow.
\end{proof}

Our final corollary shows that a natural choice for $\mathcal{L}$ will be one of our trial spaces. Choosing such an $\mathcal{L}$ will also be extremely convenient.

\begin{corollary}
Let $\mathcal{L}=\mathcal{L}_k$. There exists an $N\in\mathbb{N}$ such that 
\[
\hat{\delta}(\mathcal{M}_n,\mathcal{L}(\Delta))\to 0\quad\forall k\ge N.
\]
If $A$ is bounded and $k\ge N$, then
\[
d_{H}(\sigma(A,\mathcal{M}_n),\{\lambda_1,\dots,\lambda_d\})\to 0.
\]
If $A$ is semi-bounded and $k\ge N$, then
\[
\hat{\delta}_{\frak{a}}(\mathcal{M}_n,\mathcal{L}(\Delta))\to 0\quad\text{and}\quad d_{H}(\sigma(A,\mathcal{M}_n),\{\lambda_1,\dots,\lambda_d\})\to 0.
\]
\end{corollary}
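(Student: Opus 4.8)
The plan is to recognise that this corollary is simply Theorem \ref{espaces} applied with the fixed auxiliary space taken to be one of the trial spaces, $\mathcal{L} = \mathcal{L}_k$, and that the sole hypothesis of that theorem, namely $\delta(\mathcal{L}(\Delta),\mathcal{L})<1$, is automatically met once $k$ is large. For each fixed $k$, the orthogonal projection $P$ onto $\mathcal{L}_k$, the operator $EPE$, and the spectral subspaces $\mathcal{M}_n$ built from the \emph{varying} Galerkin eigenspaces $\mathcal{L}_n(\Delta)$ are exactly the objects appearing in Theorem \ref{espaces}. Hence, once the gap condition is secured, the three asserted limits are precisely \eqref{approxed1}, \eqref{approxed2}, and \eqref{approxed3}, taken with $n\to\infty$ and $k$ held fixed.

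The crux is therefore to show that $\delta(\mathcal{L}(\Delta),\mathcal{L}_k)\to 0$ as $k\to\infty$; this furnishes the integer $N$ for which $\delta(\mathcal{L}(\Delta),\mathcal{L}_k)<1$ whenever $k\ge N$. First I would fix an orthonormal basis $e_1,\dots,e_d$ of the finite-dimensional eigenspace $\mathcal{L}(\Delta)$. Since $\mathcal{L}(\Delta)\subset\Dom(A)\subset\Dom(\frak{a})$, the standing trial-space assumption supplies, for each $i$, vectors $u_k^{(i)}\in\mathcal{L}_k$ with $\Vert e_i-u_k^{(i)}\Vert_{\frak{a}}\to 0$. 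Because $A-m+1\ge 1$ gives $\Vert w\Vert\le\Vert w\Vert_{\frak{a}}$ for all $w\in\Dom(\frak{a})$, it follows that $\dist(e_i,\mathcal{L}_k)\le\Vert e_i-u_k^{(i)}\Vert\to 0$ as well.

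To pass from the basis vectors to the whole unit sphere of $\mathcal{L}(\Delta)$, I would write an arbitrary unit vector as $u=\sum_{i=1}^d c_i e_i$ with $\sum_i|c_i|^2=1$ and estimate, by the triangle inequality and Cauchy--Schwarz,
\[
\dist(u,\mathcal{L}_k)\le\Big\Vert\sum_{i=1}^d c_i\,(e_i-u_k^{(i)})\Big\Vert\le\sqrt{d}\,\max_{1\le i\le d}\Vert e_i-u_k^{(i)}\Vert.
\]
Taking the supremum over such $u$ yields $\delta(\mathcal{L}(\Delta),\mathcal{L}_k)\le\sqrt{d}\,\max_i\Vert e_i-u_k^{(i)}\Vert\to 0$, which is the required gap convergence. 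It is the finite-dimensionality of $\mathcal{L}(\Delta)$ that converts the basis-wise approximation guaranteed by the trial-space hypothesis into a uniform gap estimate, and that makes the crude domination $\Vert\cdot\Vert\le\Vert\cdot\Vert_{\frak{a}}$ adequate even though the gap is measured in the plain $\mathcal{H}$-norm.

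With the gap condition established for every $k\ge N$, the remaining conclusions follow verbatim from Theorem \ref{espaces} applied to $\mathcal{L}=\mathcal{L}_k$. I do not anticipate a genuine obstacle: the argument is a soft reduction, and the only point demanding care is the bookkeeping between the two indices---$k$, which fixes the auxiliary space and need only be taken large enough to force $\delta(\mathcal{L}(\Delta),\mathcal{L}_k)<1$, and $n$, the running Galerkin index along which all of the convergences actually take place.
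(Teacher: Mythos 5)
Your proposal is correct and follows exactly the paper's route: reduce to Theorem \ref{espaces} by observing that the standing trial-space hypothesis forces $\delta(\mathcal{L}(\Delta),\mathcal{L}_k)\to 0$, hence $<1$ for all large $k$. The paper simply asserts this convergence as a known fact, whereas you supply the (correct) verification via a basis of the finite-dimensional eigenspace and the domination $\Vert\cdot\Vert\le\Vert\cdot\Vert_{\frak{a}}$.
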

\begin{proof}
The existence of such an $N$ is ensured by the fact that $\delta(\mathcal{L}(\Delta),\mathcal{L}_n)$ converges to zero. Hence, all assertions follow from Theorem \ref{espaces}.
\end{proof}

\section{Examples}

The results of the previous section present us with a new approach to approximating eigenspaces and eigenvalues. Before looking at some examples, let us briefly discuss the procedure we should follow. 

First, suppose that we know $d=\dim\mathcal{L}(\Delta)$. Then we should choose an $\mathcal{L}$ with $\dim\mathcal{L}\ge d$. An obvious choice for $\mathcal{L}$ being one of our trial spaces. With $\mathcal{L}$ chosen, we compute $\sigma(P,\mathcal{L}_n(\Delta))$ for increasing values of $n$. If $\sigma(P,\mathcal{L}_n(\Delta))$ appears to converge to $d$ non-zero eigenvalues, then we may approximate $\Delta\cap\sigma(A)$ with $\sigma(A,\mathcal{M}_n)$. If instead, $\sigma(P,\mathcal{L}_n(\Delta))$ appears to converge to less than $d$ non-zero eigenvalues, then this suggests that $\delta(\mathcal{L}(\Delta),\mathcal{L})\approx 1$. We should choose a new $\mathcal{L}$ with the natural choice being a higher dimensional trial space, and repeat.

Secondly, suppose that we do not know $d=\dim\mathcal{L}(\Delta)$. Choose a low dimensional $\mathcal{L}$, say $\dim\mathcal{L}=s$. Then compute $\sigma(P,\mathcal{L}_n(\Delta))$ for increasing values of $n$. If $\sigma(P,\mathcal{L}_n(\Delta))$ appears to converge to $r<s$ non-zero eigenvalues, then we may approximate $\Delta\cap\sigma(A)$ with $\sigma(A,\mathcal{M}_n)$.  If instead, $\sigma(P,\mathcal{L}_n(\Delta))$ appears to converge to $s$ non-zero eigenvalues, then this suggests that $\dim\mathcal{L}(\Delta)\ge\dim\mathcal{L}$. We should choose a new $\mathcal{L}$ with the natural choice being a higher dimensional trial space, and repeat. We might also increase the dimension of $\mathcal{L}$ and check that $\sigma(P,\mathcal{L}_n(\Delta))$ still converges to $r$ non-zero eigenvalues.

\begin{example}\label{ex1}
Let $\mathcal{H}=L^2(-\pi,\pi)$ and $v_k = (2\pi)^{-\frac{1}{2}}e^{-ikx}$ for $k\in\mathbb{Z}$. For each $u\in\mathcal{H}$,
\begin{align*}
Au = a(x)u + 10\langle u,v_0\rangle v_0\quad\textrm{where}\quad a(x)=\begin{cases}
  -2\pi - x & \text{for}\quad x\in(-\pi,0], \\
  2\pi - x & \text{for}\quad x\in(0,\pi].
  \end{cases}
\end{align*}
Then $\sigma_{\ess}(A)=[-2\pi,-\pi]\cup[\pi,2\pi]$ and $\sigma_{\dis}(A)$ consists of two simple eigenvalues 
\[\lambda_1\approx -1.64834270\quad\text{and}\quad\lambda_2\approx 11.97518502;\] see \cite[Lemma 12]{DP}. With trial spaces 
$\mathcal{L}_{2k+1}=\Span\{v_{-k},\dots,v_k\}$
we shall approximate the eigenvalue $\lambda_1$ which is located in the gap in the essential spectrum. Figure 1, shows $\sigma(A,\mathcal{L}_{n})$ for $n=17,65,257,1025$ and $4097$. Increasing with $n$, are the number of \emph{spurious} Galerkin eigenvalues in the interval $(-\pi,\pi)$. These spurious Galerkin eigenvalues obscure the approximation of $\lambda_1$; see also \cite[Example 1]{lesh}. We set $\Delta=[-\pi+0.001,\pi-0.001]$ and $\mathcal{L}=\mathcal{L}_{1}$. Then
\[
\dim\mathcal{L}(\Delta)=1,\quad\dim\mathcal{L}=1\quad\text{and}\quad\Delta\cap\sigma(A)=\{\lambda_1\}.
\]
Figure 2, shows $\sigma(P_1,\mathcal{L}_{n}(\Delta))$ and, as $n$ increases, we converge to only one non-zero eigenvalue which is approximately $0.12$. The corresponding eigenvector, by Theorem \ref{espaces}, will converge to the eigenvector corresponding to $\lambda_1$. We compare the approximation of $\lambda_1$ by $\sigma(A,\mathcal{M}_n)$ with the approximation provided by the perturbation method. For the latter, we calculate $\sigma(A+iP_{(n-1)/2},\mathcal{L}_n)$ and obtain a sequence $\mu_{n}\in\sigma(A+iP_{(n-1)/2},\mathcal{L}_n)$ with $\mu_n\to\lambda_1+i$; see \cite{me4} for further details. Figure 3, shows the distance of $\sigma(A,\mathcal{M}_n)$ to $\lambda_1$ and of $\sigma(A+iP_{(n-1)/2},\mathcal{L}_n)$ to $\lambda_1+i$. Both methods converge at about the same rate, though the actual approximation by the former is more accurate.
\begin{figure}[h!]
\centering
\includegraphics[scale=.265]{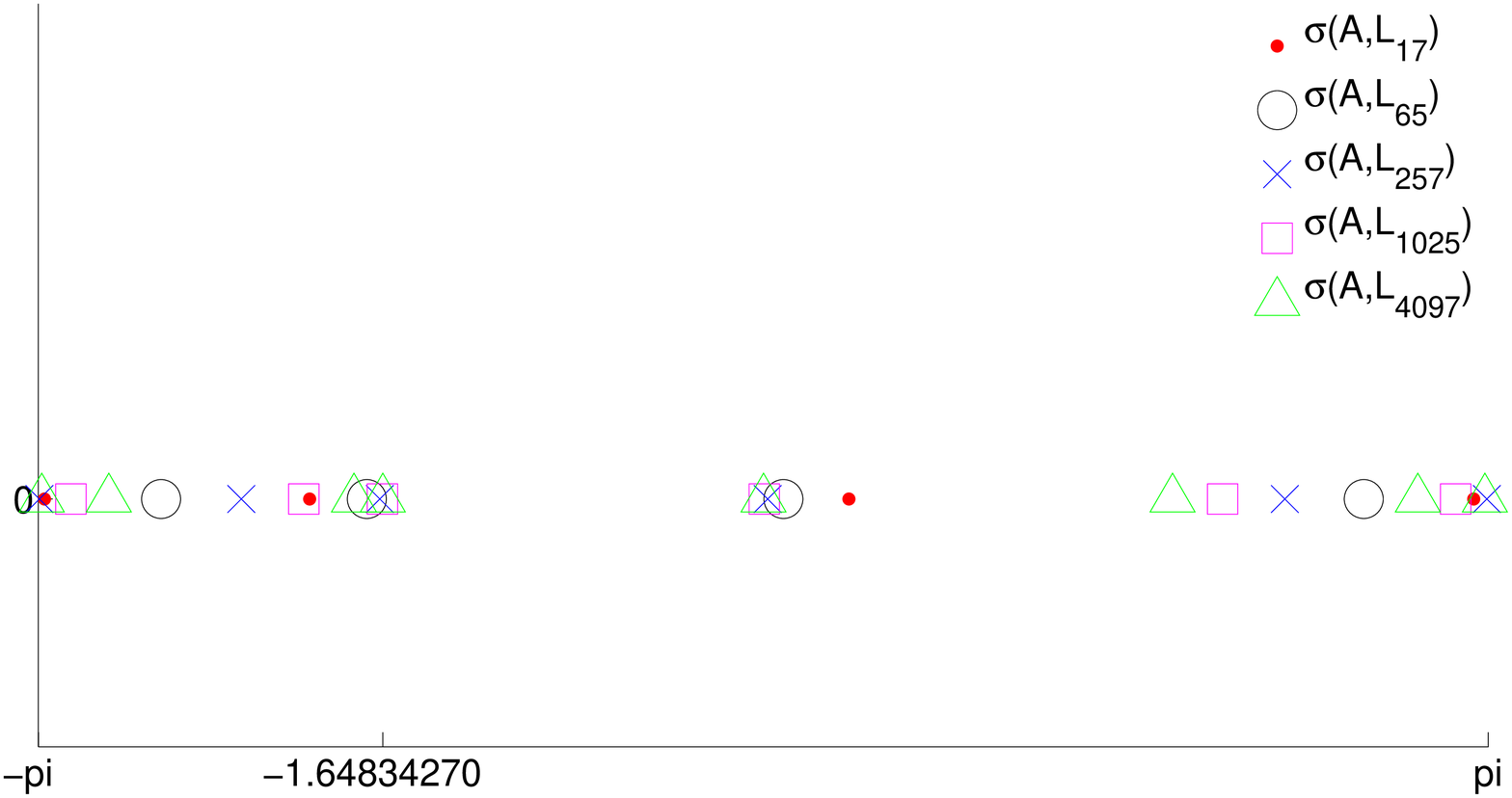}
\caption{Spurious Galerkin eigenvalues obscure the approximation of $\lambda_1$.}
\end{figure}
\begin{figure}[h!]
\centering
\includegraphics[scale=.265]{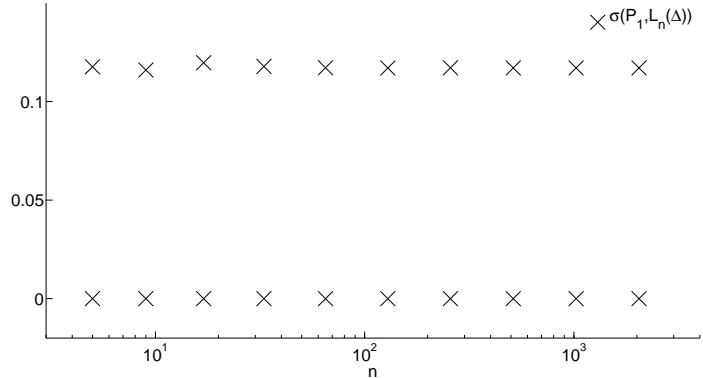}
\caption{$\sigma(P_1,\mathcal{L}_n(\Delta))$ converging to a single non-zero eigenvalue $\approx 0.12$.}
\end{figure}
\begin{figure}[h!]
\centering
\includegraphics[scale=.265]{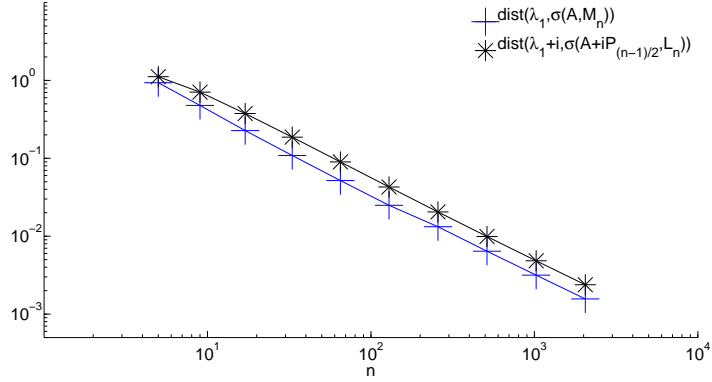}
\caption{Approximation of $\lambda_1$ with $\sigma(A+iP_{(n-1)/2},\mathcal{L}_n)$ and with $\sigma(A,\mathcal{M}_n)$.}

\end{figure}
\end{example}

\begin{example}\label{ex2}Let $\mathcal{H}=L^2(0,1)\times L^2(0,1)$ and consider the block-operator matrix
\begin{displaymath}
A_0=\small{\left(
\begin{array}{cc}
-\frac{d^2}{dx^2} & -\frac{d}{dx}\\
& \\
\frac{d}{dx} & 2
\end{array} \right)}\hspace{5pt}\textrm{with}\hspace{5pt}\Dom(A_0)=H^2(0,1)\cap H^1_0(0,1)\times H^1(0,1).
\end{displaymath}
The closure of $A_0$ is self-adjoint and denoted by $A$. Then
$\sigma_{\ess}(A)=\{1\}$;
see \cite[Example 2.4.11]{Tretter}. The discrete spectrum consists of the simple eigenvalue $2$ with eigenvector $(0,1)^T$, and two sequences of simple eigenvalues:
\[
\lambda_k^\pm := \frac{2+k^2\pi^2 \pm\sqrt{(k^2\pi^2 + 2)^2  - 4k^2\pi^2}}{2}\quad\text{where}\quad \lambda_k^-\nearrow1\quad\text{and}\quad\lambda_k^+\to\infty.
\]
Let $\cL_h^0$ be the FEM space of piecewise linear trial functions on $[0,1]$ with uniform mesh size $h$ and satisfying homogeneous Dirichlet boundary conditions. Let $\cL_h$ be the space without boundary conditions. Set $L_{h}=\cL_h^0\times\cL_h$. Figure 4, shows many spurious Galerkin eigenvalues in $(1,2)\subset\rho(A)$ which obscure the approximation of the genuine eigenvalue 2. We set $\mathcal{L}=L_{1/2}$ and $\Delta = [1.001,12]$, then
\[
\dim\mathcal{L}(\Delta)=2,\quad\dim \mathcal{L}=4\quad\text{and}\quad\Delta\cap\sigma(A)=\{2,\lambda^+_1\}
\]
where $\lambda_1^+\approx 10.96990625$. Evidently, we always have $(0,1)^T\in L_h$ and hence
\[
\left(
\begin{array}{c}
0\\
1
\end{array} \right)\in L_h(\Delta)\cap\mathcal{L}\quad\text{and}\quad P_{1/2}\left(
\begin{array}{c}
0\\
1
\end{array} \right) = \left(
\begin{array}{c}
0\\
1
\end{array} \right).
\]
Consequently, we always have $1\in\sigma(P_{1/2},L_h(\Delta))$ and we expect, as $h\to 0$, $\sigma(P_{1/2},L_h(\Delta))$ to converge to a second non-zero eigenvalue. Figure 5, shows this is indeed the case. The two dimensional subspace $\mathcal{M}_n$ will contain $(0,1)^T$ and an approximation of the eigenvector corresponding to $\lambda_1^+$. Therefore, $\sigma(A,\mathcal{M}_h)$ consists of $2$ and an approximation of $\lambda_1^+$. There are no spurious Galerkin eigenvalues obscuring the approximation of $\lambda_1^+$. Table 1, shows the approximation of $\lambda_1^+$ using $\sigma(A,\mathcal{M}_h)$ and $\sigma(A,\mathcal{L}_h)$. Figure 6, shows a loglog plot of the distance of $\lambda_1^+$ to  $\sigma(A,\mathcal{M}_h)$ and  $\sigma(A,\mathcal{L}_h)$. Although the convergence rates are essentially the same, the approximation by $\sigma(A,\mathcal{M}_h)$ is actually outperforming the Galerkin method.

\begin{figure}[h!]
\centering
\includegraphics[scale=.265]{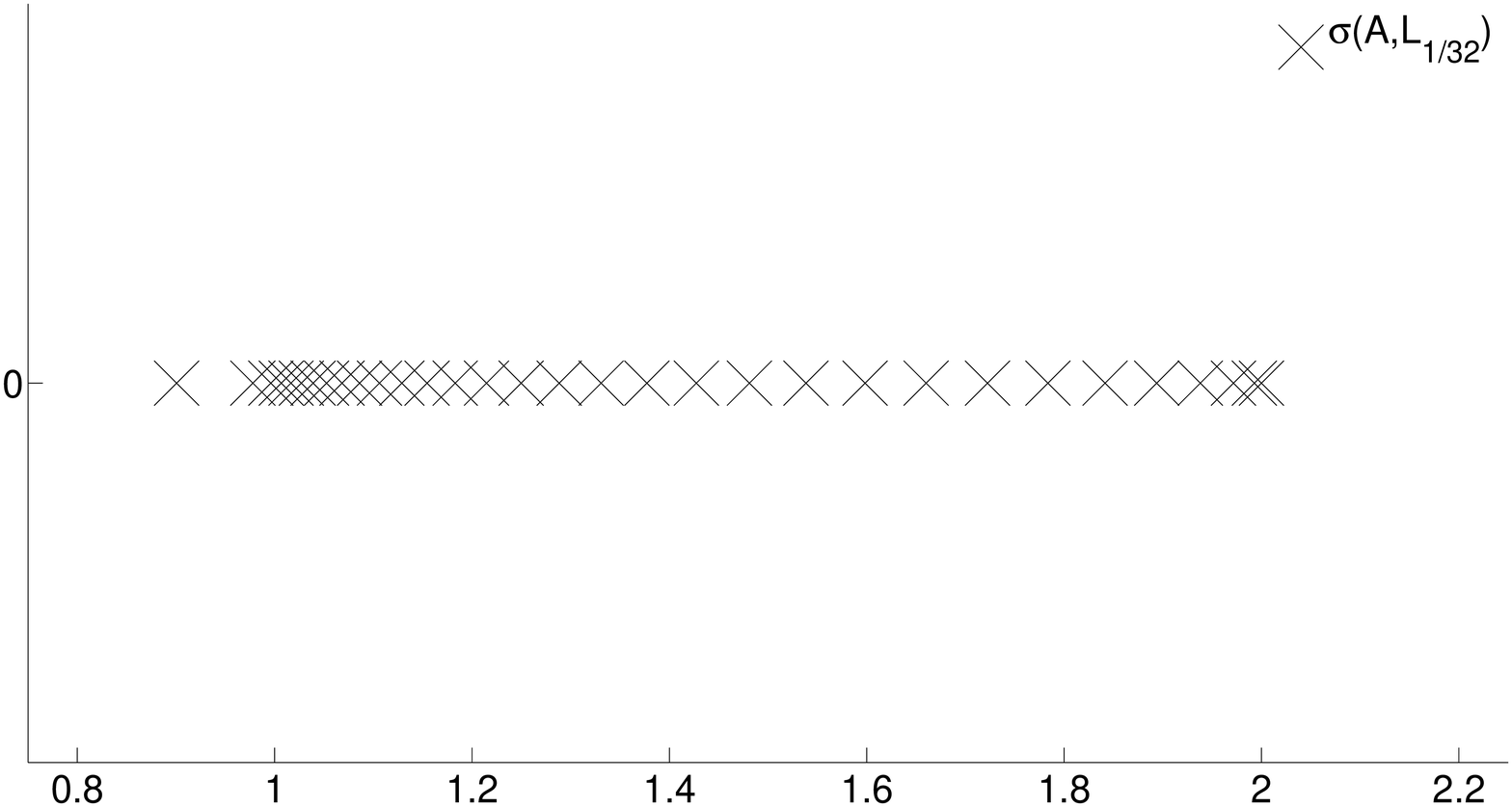}
\caption{Spurious Galerkin eigenvalues in the interval $(1,2)\subset\rho(A)$.}
\end{figure}

\begin{figure}[h!]
\centering
\includegraphics[scale=.265]{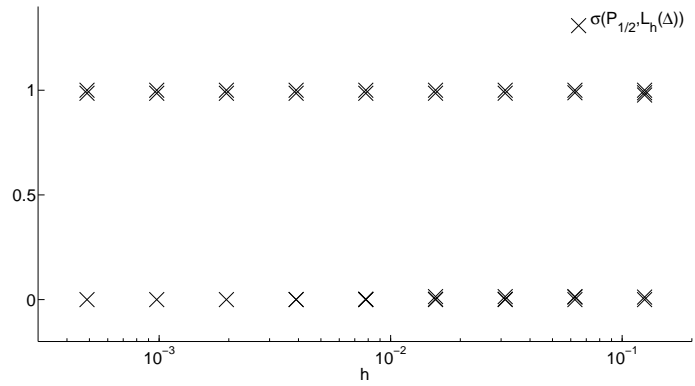}
\caption{$\sigma(P_{1/2},L_h(\Delta))$ converging to two non-zero eigenvalues.}
\end{figure}

\begin{table}[hhh]
\centering
\begin{tabular}{c|c|c}
\hspace{30pt}h\hspace{30pt} & \hspace{30pt}$\sigma(A,\mathcal{M}_h)$\hspace{30pt} & \hspace{30pt}$\sigma(A,\mathcal{L}_h)$\hspace{30pt} \\
\hline
1/8 &     11.05969611  & 11.08334840 \\
1/16 &   10.97328312 &  10.99818000 \\
1/32 &   10.97490592 &  10.97696913 \\
1/64 &   10.96960440 &  10.97167162 \\
1/128 & 10.97002620 &  10.97034757 \\
1/256 & 10.96991628 &  10.97001658\\
1/512 & 10.96991153 &  10.96993383\\
1/1024 & 10.96990927 &10.96991314
\vspace{10pt}
\end{tabular}
\caption{Approximation of $\lambda_1^+\approx 10.96990625$ using $\sigma(A,\mathcal{M}_h)$ and $\sigma(A,\mathcal{L}_h)$.}
\end{table}

\begin{figure}[h!]
\centering
\includegraphics[scale=.265]{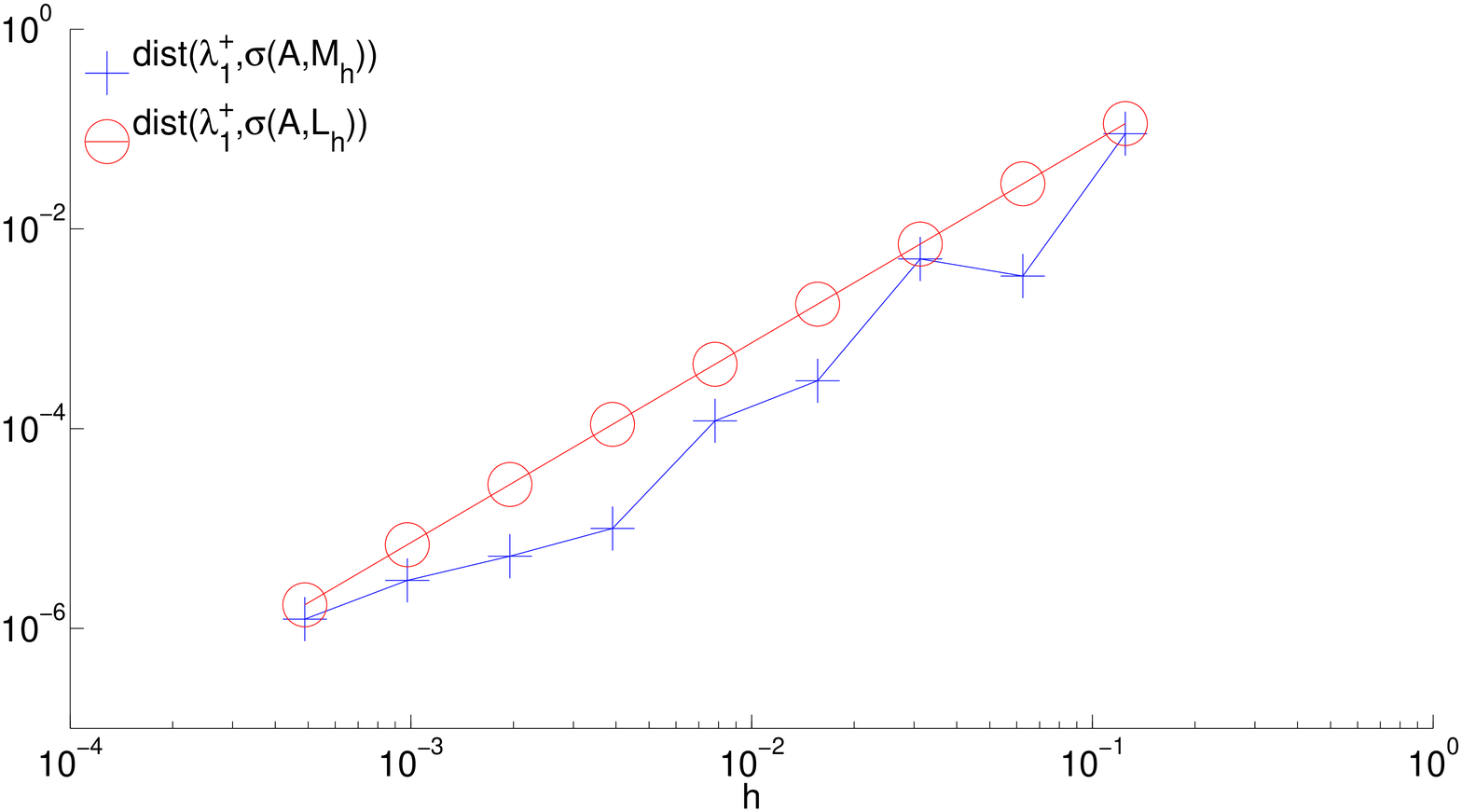}
\caption{Approximation of $\lambda_1^+$ using $\sigma(A,\mathcal{L}_h)$ and $\sigma(A,\mathcal{M}_h)$.}
\end{figure}
\end{example}

\begin{example}\label{ex3}
Let $\mathcal{H}=L^2(0,1)\times L^2(0,1)\times L^2(0,1)$ and consider the following block-operator matrix which arises in magnetohydrodynamics
\begin{displaymath}
A_0=\small{\left(\begin{array}{ccc}
        -\frac{d}{dx}(\upsilon_a^2 + \upsilon_s^2)\frac{d}{dx} + k^2\upsilon_a^2 & -i(\frac{d}{dx}(\upsilon_a^2 + \upsilon_s^2) -1)k_\perp & -i(\frac{d}{dx}\upsilon_s^2 -1)k_\parallel\\
        ~&~&~\\
	-ik_\perp((\upsilon_a^2 + \upsilon_s^2)\frac{d}{dx} +1) & k^2\upsilon_a^2 + k_\perp^2\upsilon_s^2 & k_\perp k_\parallel\upsilon_s^2\\
	~&~&~\\
	-ik_\parallel(\upsilon_s^2\frac{d}{dx} +1) & k_\perp k_\parallel\upsilon_s^2 & k_\parallel^2\upsilon_s^2
      \end{array}\right)}
\end{displaymath}
where
\[
\Dom(A_0) = H^2(0,1)\cap H^1_0(0,1)\times H^1(0,1)\times H^1(0,1).
\]
The closure of $A_0$ is self-adjoint and denoted by $A$. For simplicity we set
\[
k_\perp=k_\parallel=g=1,\quad\upsilon_{a}(x)=\sqrt{7/8 - x/2}\quad\text{and}\quad\upsilon_s(x)=\sqrt{1/8+x/2},
\]
then 
\[
\sigma_{\ess}(A)=\range(\upsilon_a^2k_\parallel)\cup\range\left(\frac{\upsilon_a^2\upsilon_s^2k_\perp}{\upsilon_a^2+\upsilon_s^2}\right) = [3/8,7/8]\cup[7/64,1/4];
\]
see \cite[Section 5]{atk} and \cite[Theorem 3.1.3]{Tretter}. The discrete spectrum contains a sequence of simple eigenvalues accumilating at $\infty$. These eigenvalues lie above, and are not close to, the essential spectrum. They are approximated by the Galerkin method, with trial spaces $L_h=\cL_{h}^0\times\cL_h\times\cL_h$, without incurring spectral pollution. 

It was shown, using the second order relative spectrum, that there is also an eigenvalue $\lambda_1\approx 0.279$ in the gap in the essential spectrum; see \cite[Example 2.7]{me2}. This eigenvalue was also located by the perturbation method; see \cite[Example 5]{me4}. In both cases, the numerical evidence suggests that the eigenvalue is simple. As shown in Figure 7, the eigenvalue $\lambda_1$ is completely obscured by spurious Galerkin eigenvalues.  We set $\mathcal{L}=L_{1/2}$ and $\Delta_1 = [1/4+0.001,3/8-0.001]$, then $\dim(\mathcal{L})=7$ and the numerical evidence, from \cite{me2,me4}, suggests that
\begin{equation}\label{sugg}
\dim(\mathcal{L}(\Delta_1))=1\quad\text{with}\quad\Delta_1\cap\sigma(A)=\{\lambda_1\}.
\end{equation}
Figure 8, shows $\sigma(P_{1/2},L_h(\Delta_1))$ which  converges to a single non-zero eigenvalue and thus provides further evidence that \eqref{sugg} is correct. The subspace $\mathcal{M}_h$ is the span of the corresponding eigenvector. Table 2, shows the approximation of $\lambda_1$ using $\sigma(A,\mathcal{M}_h)$ and the perturbation method.

\begin{figure}[h!]
\centering
\includegraphics[scale=.265]{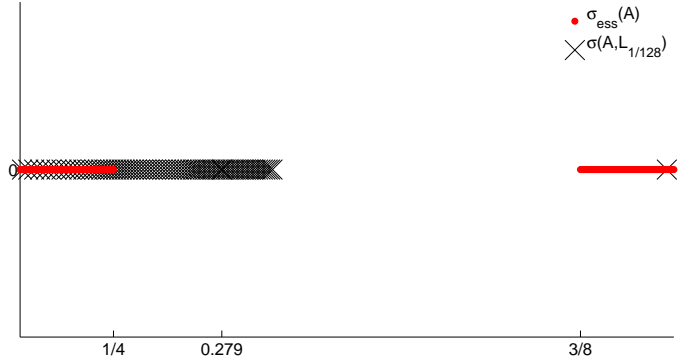}
\caption{Spurious Galerkin eigenvalues obscure the approximation of $\lambda_1\approx 0.279$.}
\end{figure}

\begin{figure}[h!]
\centering
\includegraphics[scale=.265]{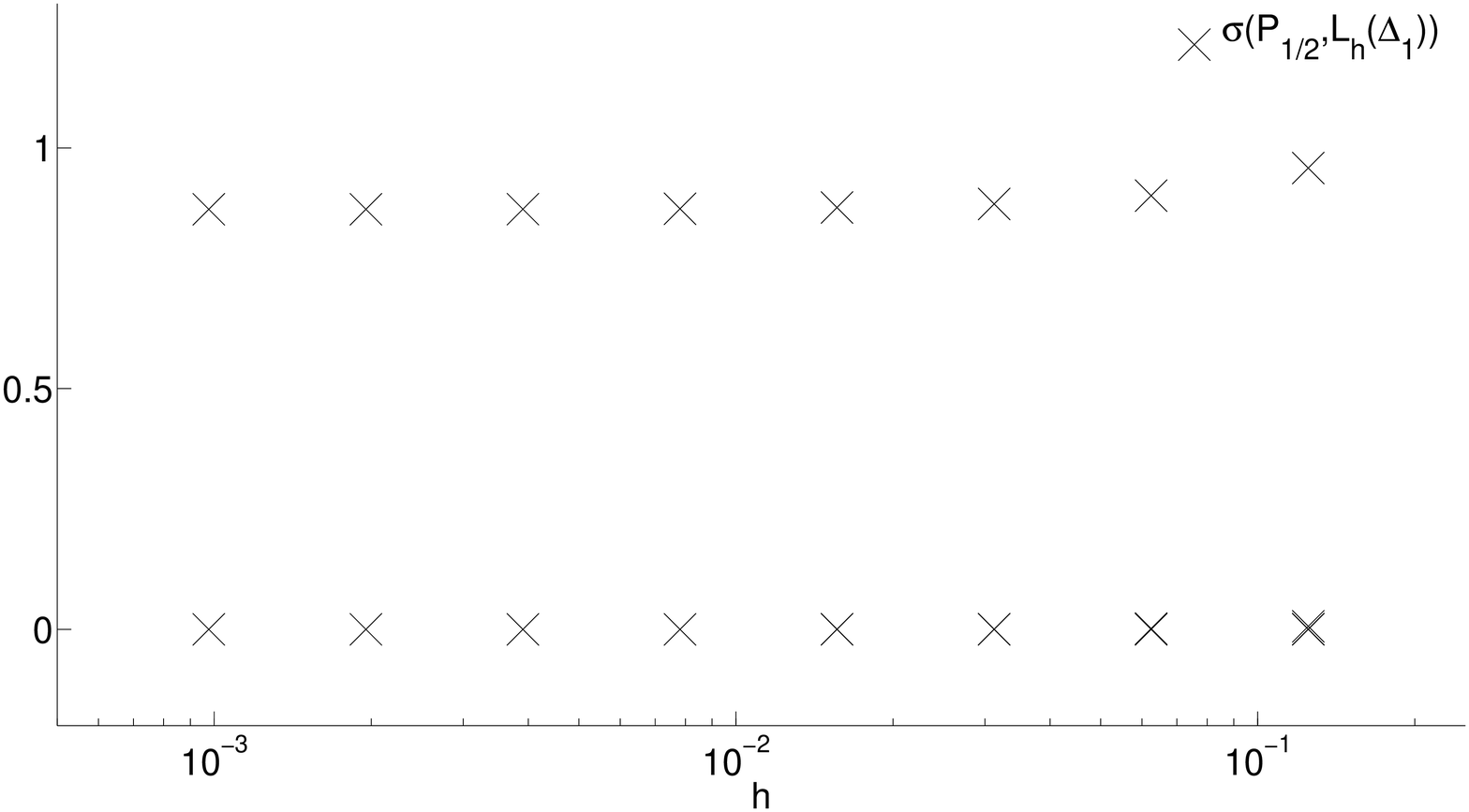}
\caption{$\sigma(P_{1/2},L_h(\Delta_1))$ converging to one non-zero eigenvalue.}
\end{figure}

\begin{table}[hhh]
\centering
\begin{tabular}{c|c|c}
\hspace{30pt}h\hspace{30pt} & \hspace{30pt}$\sigma(A,\mathcal{M}_h)$\hspace{30pt} & \hspace{30pt}$\sigma(A+iP_{2h},\mathcal{L}_h)$\hspace{30pt} \\
\hline
1/8 &     0.28037548 & 0.28071256 \\
1/16 &   0.27982165 & 0.28028198 \\
1/32 &   0.27931501 & 0.27940131 \\
1/64 &   0.27912106 & 0.27913080 \\
1/128 & 0.27905636& 0.27905757 \\
1/256 & 0.27903778 & 0.27903793\\
1/512   & 0.27903279 &0.27903281\\
1/1024 & 0.27903149 & \hspace{3pt}0.27903150
\vspace{10pt}
\end{tabular}
\caption{Approximation of $\lambda_1$ using $\sigma(A,\mathcal{M}_h)$  and  $\sigma(A+iP_{2h},\mathcal{L}_h)$.}
\end{table}

A further eigenvalue $\lambda_2\approx 1.734$ was located by the perturbation method; \cite[Example 5]{me4}. Again, the perturbation method suggests that the eigenvalue is simple.  As shown in Figure 9, the eigenvalue $\lambda_2$ is completely obscured by spurious Galerkin eigenvalues. We set $\mathcal{L}=L_{1/2}$ and $\Delta_2 = [7/8+0.001,3]$, then $\dim(\mathcal{L})=7$ and the numerical evidence, from \cite{me4}, suggests that
\begin{equation}\label{suggs}
\dim(\mathcal{L}(\Delta_2))=1\quad\text{with}\quad\Delta_2\cap\sigma(A)=\{\lambda_2\}.
\end{equation}
Figure 10, shows $\sigma(P_{1/2},L_h(\Delta_2))$ which converges to a single non-zero eigenvalue and thus provides further evidence that \eqref{suggs} is correct. The subspace $\mathcal{M}_h$ is the span of the corresponding eigenvector. Table 3, shows the approximation of $\lambda_2$ using $\sigma(A,\mathcal{M}_h)$ and the perturbation method.

\begin{figure}[h!]
\centering
\includegraphics[scale=.265]{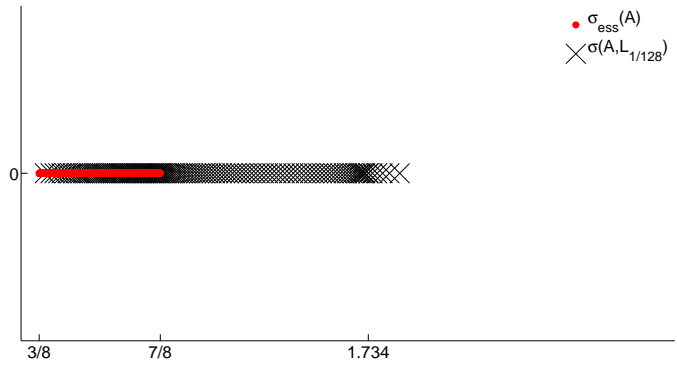}
\caption{Spectral pollution above $\sigma_{\ess}(A)$ obscures the approximation of $\lambda_2\approx 1.734$.}
\end{figure}

\begin{figure}[h!]
\centering
\includegraphics[scale=.265]{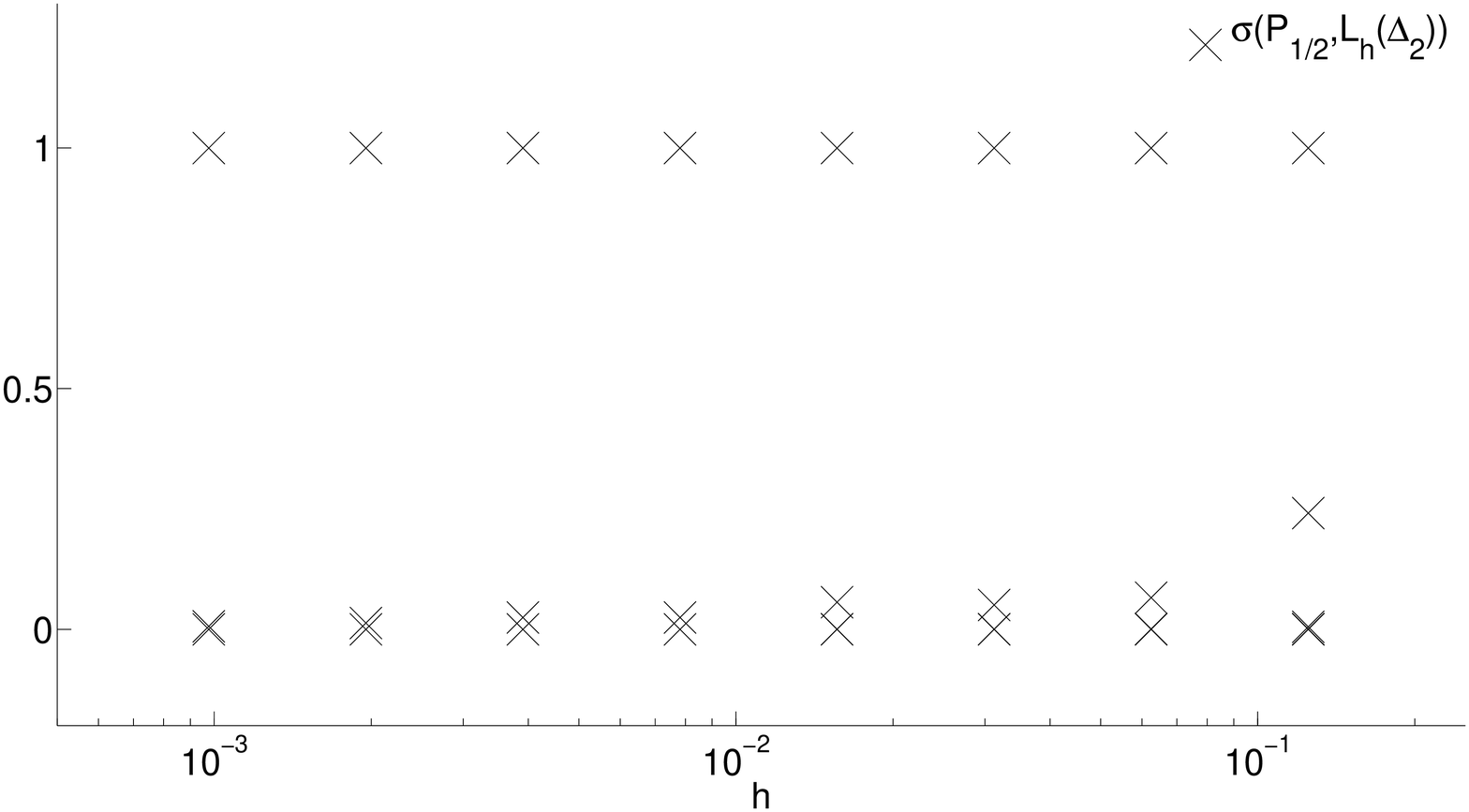}
\caption{$\sigma(P_{1/2},L_h(\Delta_2))$ converging to one non-zero eigenvalue.}
\end{figure}

\begin{table}[hhh]
\centering
\begin{tabular}{c|c|c}
\hspace{30pt}h\hspace{30pt} & \hspace{30pt}$\sigma(A,\mathcal{M}_h)$\hspace{30pt} & \hspace{30pt}$\sigma(A+iP_{2h},\mathcal{L}_h)$\hspace{30pt} \\
\hline
1/8 &     1.73461704 & 1.73467528 \\
1/16 &   1.73463871 & 1.73464550 \\
1/32 &   1.73463393 & 1.73463690 \\
1/64 &   1.73463291 & 1.73463471 \\
1/128 & 1.73463343 & 1.73463416\\
1/256 & 1.73463339 & 1.73463403\\
1/512 & 1.73463368 & 1.73463400\\
1/1025 & 1.73463384 &\hspace{3pt}1.73463399
\vspace{10pt}
\end{tabular}
\caption{Approximation of $\lambda_2$ using $\sigma(A,\mathcal{M}_h)$  and  $\sigma(A+iP_{2h},\mathcal{L}_h)$.}
\end{table}

\end{example}

\void{
\begin{lemma}\label{l2b}
$\delta_{\frak{a}}\big(\cL(\Delta),\cL_n(\Delta)\big)=\mathcal{O}(\varepsilon_n)$.
\end{lemma}
\begin{proof}
Let  $\cL_n(\Delta) = \Span\{u_{n,1},\dots,u_{n,d_n}\}$
where the $u_{n,j}$ are orthonormal eigenvectors of $A_n$, so that
\[\frak{a}(u_{n,j},v)=\mu_{n,j}\langle u_{n,j},v\rangle\quad\forall v\in\cL_n\quad\textrm{where}\quad\mu_{n,j}\in\Delta.\]
For each $v\in\cL_n$ we set $(T-m+1)^{\frac{1}{2}}v=:\tilde{v}\in\tilde{\cL}_n:=(T-m+1)^{\frac{1}{2}}\cL_n$,
and hence
\begin{equation}\label{es}
\langle(T-m+1)^{-1}\tilde{u}_{n,j},\tilde{v}\rangle=\frac{1}{\mu_{n,j}-m+1}\langle\tilde{u}_{n,j},\tilde{v}\rangle\quad\forall\tilde{v}\in\tilde{\cL}_n
\end{equation}
where
\[
\frac{1}{\mu_{n,j}-m+1}\in\tilde{\Delta}:=\left[\frac{1}{b-m+1},\frac{1}{a-m+1}\right].
\]
Evidently, the set
\[\left\{\frac{\tilde{u}_{n,1}}{\sqrt{\mu_{n,1}-m+1}},\dots,\frac{\tilde{u}_{n,d_n}}{\sqrt{\mu_{n,d_n}-m+1}}\right\}\]
consists of orthonormal eigenvectors associated to $\sigma((T-m+1)^{-1},\tilde{\cL}_n)\cap\tilde{\Delta}$. It is straightforward to show that $\delta(\cL(\Delta),\tilde{\cL}_n)=\mathcal{O}(\varepsilon_n)$. Using Lemma \ref{l1b} we have for any normalised $u$ with $(T-\lambda)u = 0$ and $\lambda\in\Delta$,
\begin{align*}
\mathcal{O}(\varepsilon_n)&=\delta(\cL(\Delta),\tilde{\cL}_n)\\
&\ge\left\Vert\sum_{j=1}^{d_n}\left\langle u,\frac{\tilde{u}_{n,j}}{\Vert\tilde{u}_{n,j}\Vert}\right\rangle
\frac{\tilde{u}_{n,j}}{\Vert\tilde{u}
_{n,j}\Vert} - u\right\Vert\\
&=\left\Vert\sum\frac{\langle u,(A-m+1)^{\frac{1}{2}}u_{n,j}\rangle}{\mu_{n,j}-m+1}
(A-m+1)^{\frac{1}{2}}u_{n,j} - u\right\Vert\\
&=\left\Vert(A-m+1)^{\frac{1}{2}}\left(\sum\frac{\sqrt{\lambda-m+1}}{\mu_{n,j}-m+1}\langle u,u_{n,j}\rangle
u_{n,j} - \frac{u}{\sqrt{\lambda-m+1}}\right)\right\Vert\\
&=\left\Vert\sum\frac{\sqrt{\lambda-m+1}}{\mu_{n,j}-m+1}\langle u,u_{n,j}\rangle
u_{n,j} -\frac{u}{\Vert u\Vert_{\frak{a}}}\right\Vert_{\frak{a}}\\
&\ge \dist_{\frak{a}}\left(\frac{u}{\Vert u\Vert_{\frak{a}}},\cL_n(\Delta)\right).
\end{align*}
\end{proof}
}

\section{Conclusions}
The new technique we have presented can, in view of Corollary \ref{espaces3}, be regarded as a filter for the Galerkin method. Since if the latter does not incur spectral pollution in $\Delta$, then $\sigma(A,\mathcal{M}_n)$ will not alter the approximation. We should activate the algorithm when we have reason to be suspicious of the Galerkin method. For example, the large number of Galerkin eigenvalues just above $\sigma_{\ess}(A)$ in examples \ref{ex2} and \ref{ex3}. Also, we should always be wary of Galerkin eigenvalues in gaps in the essential spectrum, as we saw in examples \ref{ex1} and \ref{ex3}. In each case though, our algorithm easily filters out the spurious Galerkin eigenvalues and reveals an approximation of the genuine eigenvalues. We stress the ease with which this algorithm is employed; we use only the matrices required for the Galerkin method. Compared to the alternative techniques, our method is very simple, easy to apply, efficient, and accurate.

\section*{Acknowledgements}
The author is grateful to Marco Marletta for many useful discussion and acknowledges the support from the Wales Institute of Mathematical and Computational Sciences and the Leverhulme Trust grant: RPG-167.

\end{document}